\theoremstyle{plain}
\newtheorem{theo}{Theorem}
\newtheorem{Theorem}[theo]{Theorem}
\newtheorem{lemm}[theo]{Lemma}
\newtheorem{Lemma}[theo]{Lemma}
\newtheorem{Corollary}[theo]{Corollary}
\theoremstyle{remark}
\newtheorem{Remark}[theo]{Remark}
\newtheorem*{Remark*}{Remark}
\newtheorem{Example}[theo]{Example}
\theoremstyle{definition}
\def\C{\mathbb C}
\def\Z{\mathbb Z}
\def\R{\mathbb R}
\def\D{\mathbb D}
\def \CP {{\mathbb C}{\mathbb P}}
\def\e{\mathbf e}
\DeclareMathOperator{\Lie}{\mathrm{Lie}}
\DeclareMathOperator{\Hom}{\mathrm{Hom}}
\DeclareMathOperator{\pos}{\mathrm{pos}}
\def \ssminus {\smallsetminus}
\def \calO {{\mathcal O}}
\begin{document}

\title[Torus actions on complex manifolds]
{Completely integrable torus actions on complex manifolds with fixed points}

\author[H.~Ishida]{Hiroaki Ishida}
\address{Osaka City University Advanced Mathematical Institute, 3-3-138, Sumiyoshi-ku, 
Osaka 558-8585, Japan.}
\email{ishida@sci.osaka-cu.ac.jp}

\author[Y.~Karshon]{Yael Karshon}
\address{Dept.\ of Mathematics, University of Toronto, 
40 St.George Street, Toronto Ontario M5S 2E4, Canada}
\email{karshon@math.toronto.edu}

\date{\today}
\thanks{The first author is supported by JSPS Research Fellowships for Young 
Scientists. This work is partially supported by the JSPS Institutional Program for Young Researcher Overseas Visits ``Promoting international young researchers in mathematics and
mathematical sciences led by OCAMI"}
\thanks{The second author is partially supported by 
the Natural Sciences and Engineering Research Council of Canada}
\keywords{Torus action, complex manifold, toric manifold}
\subjclass[2010]{Primary 14M25, Secondary 32M05, 57S25}

\begin{abstract}
We show that if a holomorphic $n$ dimensional compact torus action
on a compact connected complex manifold of complex dimension $n$ 
has a fixed point
then the manifold is equivariantly biholomorphic to a smooth 
toric variety.
\end{abstract}

\maketitle

%---------------------
\section{Introduction}
\label{sec:intro}
%---------------------

We begin by recalling some notions from the theory of toric varieties. 

We work in the vector space $\Lie(S^1)^n \cong \R^n$
with the lattice $\Hom(S^1,(S^1)^n) \cong \Z^n$.
Here, we identify $\Lie(S^1)$ with $\R$ such that 
the exponential map $\exp \colon \R \to S^1$
is $t \mapsto e^{2 \pi i t}$.

A \emph{unimodular fan} is a finite set $\Delta$ of convex polyhedral cones
with the following properties.
\begin{enumerate}
\item  A face of a cone in $\Delta$ is also a cone in $\Delta$.
\item  The intersection of two cones in $\Delta$ is a common face.
\item  Every cone in $\Delta$ is unimodular, i.e., 
it has the form $\pos(\lambda_1,\ldots,\lambda_k)$
where $\lambda_1,\ldots,\lambda_k$ is part of a $\Z$-basis of the lattice.
Here, $\pos$ denotes the positive span: the set of linear combinations
with non-negative coefficients.\footnote{
This property of a cone or a fan is also described in the literature
by the adjectives
\emph{smooth}, \emph{non-singular}, \emph{regular}, and \emph{Delzant}.}
\end{enumerate}
A fan $\Delta$ is \emph{complete} if the union of the cones in $\Delta$
is all of $\Lie(S^1)^n$.

The theory of toric varieties associates to a unimodular fan $\Delta$
a complex manifold $M_\Delta$
with a holomorphic $(\C^*)^n$-action with the following properties.
\begin{enumerate}
\item 
The fixed points in $M_\Delta$ are in bijection with the $n$-dimensional
cones in $\Delta$. 
\item
Let $p$ be a fixed point in $M_\Delta$.
Then the isotropy weights at $p$ are a $\Z$-basis to 
the lattice $\Hom((S^1)^n,S^1) \subset (\Lie(S^1)^n)^*$.
Moreover, let $\lambda_1,\ldots,\lambda_n$ be the dual basis;
then the cone in $\Delta$ that corresponds to $p$
is $\pos(\lambda_1,\ldots,\lambda_n)$.
\item 
The manifold $M_\Delta$ is compact if and only if the fan $\Delta$ is complete.
\end{enumerate}
Explicitly, let $\lambda_1,\ldots,\lambda_m \in \Z^n$
be the primitive generators of the one dimensional cones in $\Delta$.
Each $\lambda_i$ encodes a homomorphism $a \mapsto a^{\lambda_i}$
from $\C^*$ to $(\C^*)^n$; together they give a homomorphism
$\pi \colon (a_1,\ldots,a_m) \mapsto \prod_{j=1}^m a_j^{\lambda_j}$
from $(\C^*)^m$ to $(\C^*)^n$.
Then $M_\Delta = U_\Delta/K_\Delta$, where
$ U_\Delta = \{ z \in \C^m \ | \ \pos(\lambda_i \, | \, z_i = 0) \in \Delta 
   \} $
and $ K_\Delta = \ker \pi $.
For the details of the construction and the proof of its properties,
we refer the reader to the book~\cite{cox} by Cox, Little, and Schenck
and to the book~\cite{audin} by Audin.

In fact, $M_\Delta$ is an \emph{algebraic} variety.  Moreover,
every smooth complex algebraic variety that is equipped with 
an algebraic $(\C^*)^n$-action
with an open dense free orbit is isomorphic to some $M_\Delta$.
(The proof of this fact appeared in the book \cite{KKMS} 
by Kempf, Knudsen, Mumford, and Saint-Donat 
and in the article \cite{miyake-oda} by Miyake and Oda
and relies on a lemma of Sumihiro \cite{sumihiro};
see Corollary 3.1.8 in~\cite{cox}.)
Our main theorem is a complex analytic variant of this result:

\begin{Theorem} \label{main-theorem}
Let $M$ be a connected complex manifold of complex dimension $n$,
equipped with a faithful action of the torus $(S^1)^n$ by biholomorphisms.
If $M$ is compact and the action has fixed points, 
then there exists a unimodular fan $\Delta$
and an $(S^1)^n$-equivariant biholomorphism of $M_\Delta$ with $M$.
\end{Theorem}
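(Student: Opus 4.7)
The plan is to extend the compact torus action to a holomorphic action of the complex torus $(\C^*)^n$, then to linearize at each fixed point and exhibit an open dense free orbit, and finally to assemble the fan from the isotropy data at the fixed points. Since $M$ is compact, every smooth vector field on $M$ is complete; applied to $J\xi_i$, where $\xi_1,\ldots,\xi_n$ are commuting generators of the torus action (and using $[\xi_i,J\xi_j]=0$ for all $i,j$, which follows from $\xi_i$ being real-holomorphic), this yields commuting complete holomorphic flows that assemble into a holomorphic $\C^n$-action on $M$. Because the $\xi_i$-subflows already close up to $(S^1)^n$, the $\C^n$-action descends to a holomorphic $(\C^*)^n$-action extending the given torus action.

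Let $p\in M$ be a fixed point. By the holomorphic version of Bochner's linearization theorem, there is an $(S^1)^n$-equivariant biholomorphism from a neighborhood of $p$ to a neighborhood of $0$ in $T_pM\cong\C^n$, on which the action is linear with isotropy weights $\mu_1,\ldots,\mu_n\in\Hom((S^1)^n,S^1)$. Since $\dim_\C M = \dim(S^1)^n$ and the action is faithful, these weights must form a $\Z$-basis of the lattice: any failure would produce either a positive-dimensional subtorus or a nontrivial finite subgroup of $(S^1)^n$ fixing the chart pointwise, and hence fixing all of $M$ by connectedness. The point in the chart with all coordinates equal to $1$ then has a free $n$-dimensional $(\C^*)^n$-orbit $U$, which is open, and by connectedness dense, in $M$.

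The main obstacle, a complex-analytic analogue of Sumihiro's theorem on equivariant affine coverings, is now to show that the local picture at each fixed point glues into a global toric structure. Concretely, for every fixed point $p'$ with weights $\mu_1',\ldots,\mu_n'$ and dual basis $\lambda_1',\ldots,\lambda_n'\in\Z^n$, I would take $\Delta$ to be the collection of cones $\pos(\lambda_i'\mid i\in I)$ as $p'$ ranges over fixed points and $I$ over subsets of $\{1,\ldots,n\}$. The delicate steps are verifying that cones arising at different fixed points assemble into a genuine fan (face and intersection axioms, using the global $(\C^*)^n$-action to relate Bochner charts), checking completeness of $\Delta$ from compactness of $M$, and patching the Bochner biholomorphisms together with the identification $U\cong(\C^*)^n$ of the open orbit into a global equivariant biholomorphism $M\cong M_\Delta$, using that two equivariant holomorphic maps agreeing on the dense orbit $U$ must agree on $M$.
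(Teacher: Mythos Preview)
Your outline matches the paper's strategy almost exactly: complexify the action, linearize at fixed points, build a fan from the isotropy data, and glue charts along the open dense free orbit. However, several of the steps you flag as ``delicate'' hide genuine content that your proposal does not supply, and one of them is a missing idea rather than a routine verification.

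The most serious gap is that Bochner linearization only gives you an equivariant biholomorphism from a \emph{small} invariant neighborhood of $p$ onto a neighborhood of $0$ in $\C^n$; these small charts need not cover $M$, and there is no a~priori reason their union should look like the affine pieces of a toric variety. The paper's key move (its Lemma~\ref{lemm:Vp}) is to \emph{sweep} the small Bochner chart by the $(\C^*)^n$-action: one chooses a one-parameter subgroup $g_t\in(\C^*)^n$ that contracts $\C^n$ into the polydisc, observes that therefore $g_tU_p\subset U_p$ for $t\ge 0$, and defines $V_p=\bigcup_{t\ge 0}g_{-t}U_p$. This yields $V_p\cong\C^n$ equivariantly. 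Only with these enlarged charts $V_p$ does one get an open set $X=\bigcup_p V_p$ that can be compared to $M_\Delta$.

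Second, the completeness of $\Delta$ is not a formality ``from compactness of $M$''. The paper's argument is: show that every $(n{-}1)$-dimensional cone in $\Delta$ is a facet of exactly two $n$-dimensional cones (by identifying the corresponding $X_I$ with a compact one-dimensional $(S^1)^n/T_I$-manifold with a fixed point, hence $\CP^1$, hence two fixed points), and then run a connectedness argument on $\Lie(S^1)^n\smallsetminus|\Delta^{n-2}|$ to conclude $|\Delta|=\Lie(S^1)^n$. Without this, you do not know $M_\Delta$ is compact.

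Finally, the logical order is the reverse of what you suggest: one first proves $X\cong M_\Delta$ (gluing the $V_p$'s via the dense-orbit argument you describe), and only then uses completeness of $\Delta$ to deduce $M_\Delta$ is compact, hence $X$ is compact and open in the connected Hausdorff $M$, so $X=M$. You cannot patch directly to a map on all of $M$ before you know the $V_p$'s cover it.
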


\begin{Remark}\  
\label{remarks}
\begin{enumerate}
\item
Our theorem gives a negative answer to a question that was raised 
by Buchstaber and Panov in \cite[Problem 5.23]{buchstaber-panov}.

Let $M$ be a closed $2n$ dimensional manifold 
with an $(S^1)^n$-action that is locally standard: 
every orbit has a neighbourhood that is equivariantly diffeomorphic,
up to an automorphism of $(S^1)^n$,
to an invariant open subset of $\C^n$ with the standard $(S^1)^n$-action. 
Also assume that the quotient $M/(S^1)^n$ is diffeomorphic, 
as a manifold with corners, to a simple convex polytope $P$ in $\R^n$.\footnote{
A map from $M/(S^1)^n$ to $P$ is a diffeomorphism of manifolds with corners
if and only if it is a homeomorphism and, for every real valued function
on $P$, the function extends to a smooth function on $\R^n$
if and only if its pullback to $M$ is smooth.
For every $k \in \{0 , \ldots,  n \} $,
a diffeomorphism carries the $k$ dimensional orbits in $M$
to the relative interiors of the $k$ dimensional faces of $P$.
}
Such manifolds, introduced in \cite{davis-januszkiewicz}
and studied in the toric topology community,
are called \emph{quasi-toric manifolds}\footnote{
Davis-Januszkiewicz~\cite{davis-januszkiewicz} used the term
\emph{toric manifold}, but this term was already used in the literature
to mean a smooth toric variety, so Buchstaber-Panov \cite{buchstaber-panov} 
introduced instead the term \emph{quasitoric manifold}. 
}.

The question of Buchstaber and Panov is 
whether there exists a non-toric quasitoric manifold 
that admits an $(S^1)^n$-invariant complex structure.

\item
Our theorem strengthens an earlier result of Ishida and Masuda,
that if a closed complex manifold of complex dimension $n$
admits an $(S^1)^n$-action,
and if its odd-degree cohomology groups vanish,
then the Todd genus of the manifold is equal to one.
See~\cite[Theorem 1.1 and Remark 1.2]{ishida-masuda}.

%%%%%%%%%%%%%%%%%%
\item 
In Theorem~\ref{main-theorem}, the assumption ``complex"
cannot be weakened to ``almost complex".
For example, for every two complex toric manifolds of complex dimension~2,
their equivariant connected sum along a free orbit
supports an invariant almost complex structure, has fixed points,
but is not (equivariantly diffeomorphic to) a toric manifold;
see \cite[\S 11.2]{k:thesis}.
For higher dimensional analogues, see \cite[\S 13]{GK};
for more interesting four dimensional examples, 
see \cite[Theorem~5.1]{masuda}.
A necessary and sufficient condition for a quasitoric manifold
to admit an invariant almost complex structure
was given in~\cite[Theorem 1]{kustarev}.

\item 
The symplectic analogue of Theorem~\ref{main-theorem}
is also true: a closed symplectic manifold of dimension $2n$
with a faithful $(S^1)^n$ action with at least one fixed point
is a symplectic toric manifold.
To see this, it is enough to show that such an action is Hamiltonian;
being a toric manifold then follows 
from Delzant's theorem \cite[Th\'{e}or\`{e}me 2.1]{delzant}.
Let $p$ a fixed point.  There exist $n$ subcircles of $(S^1)^n$
that span $(S^1)^n$ and whose isotropy weights are all positive.
In order to show that the  $(S^1)^n$  action is Hamiltonian,
it is enough to show that each of these $S^1$ actions has a momentum map.
Fix one of these $S^1$ actions. Because there is a fixed point, the $S^1$ orbits
are null-homotopic, so the $S^1$ action lifts to an $S^1$ action on the
universal bundle, $\tilde{M}$.
Because $H^1(\tilde{M}) = 0$, this lifted action is Hamiltonian.
By Morse theory, at most one point of  $\tilde{M}$  can be a strict local
minimum for the momentum map
(see, e.g., \cite{guillemin-sternberg:convexity}).
So the fibre of  $\tilde{M}$  over the fixed
point  $p$  can contain only one point.
So  $\tilde{M} = M$, and so there is a momentum map on M.

%%%%%%%%%%%%%%%%%%

\item
It is necessary to assume that the action has fixed points:
the complex torus $\C^* / (z \sim 2z)$ has a holomorphic $S^1$-action,
induced from multiplication on $\C^*$, but it is not a toric variety:
the $\C^*$-action is not faithful.

\item
It is necessary to assume that the manifold is compact:
the open unit disc in $\C$ with the natural circle action 
has a fixed point, but it is not a toric variety:
the circle action does not extend to a $\C^*$-action.
\end{enumerate}
\end{Remark}

%-------------------------------------------------------------------------
\section{The complexified action}
\label{sec:C-star-n}
%-------------------------------------------------------------------------

Let the torus $(S^1)^n$ act on a complex manifold $M$ by biholomorphisms.
If the manifold $M$ is compact, then the $(S^1)^n$-action
extends to a $(\C^*)^n$-action that is holomorphic 
not only in the sense that each element of $(\C^*)^n$ acts
by a biholomorphism but also in the sense that the action map 
$(\C^*)^n \times M \to M$ is holomorphic.
See, e.g., \cite[Theorem 4.4]{guillemin-sternberg}.
For the convenience of the reader, we briefly recall here
some of the details of this standard construction.

Let $\xi _1,\dots ,\xi _n$ be the fundamental vector fields of the
$(S^1)^n$-action with respect to the coordinate one-dimensional subtori. 
Let $J \colon TM \to TM$ be the multiplication by $\sqrt{-1}$.
We claim that the vector fields $-J\xi_1,\ldots,-J\xi_n$
are holomorphic 
(in the sense that their flows preserve the complex structure)
and commute with each other 
and with the vector fields $\xi_i$.

Because the $(S^1)^n$-action preserves $J$ and $\xi_j$,
it preserves $-J\xi_j$, for each $j$.
So the vector fields $-J\xi_j$ commute with the vector fields $\xi_i$
that generate this action.
Because $J$ is a complex structure, its Nijenhuis tensor,
$N(Z,W) := 2 \left( [JZ,JW] - J[Z,JW] - J[JZ,W] - [Z,W] \right)$, vanishes.  
Setting $Z= \xi_i$ and $W = \xi_j$, we get that
$[J\xi_i,J\xi_j] = J[\xi_i,J\xi_j] + J[J\xi_i,\xi_j] + [\xi_i,\xi_j]$,
and each of the three terms on the right hand side is zero.
So the vector fields $-J\xi_j$ commute with each other. 
A vector field $Y$ is holomorphic 
if and only if $[Y,JW] = J[Y,W]$ for each vector $W$; see 
\cite[Proposition 2.10 in Chapter IX]{kobayashi-nomizu}.
Set $Y := -J\xi_i$ and $W$ arbitrary;
because $JY(=\xi_i)$ is holomorphic, $[JY,JW] = J[JY,W]$;
by the vanishing of the Nijenhuis tensor,
\begin{equation*}
\begin{split}
 0 = N(JY,W) &= 2 \left( [-Y,JW] - J[JY,JW] - J[-Y,W] - [JY,W] \right) \\
			&=2([-Y,JW]-J[-Y,W]),
\end{split}
\end{equation*}
so $Y$ is holomorphic.

If $M$ is compact, 
the vector fields $-J\xi _1,\dots ,-J\xi _n$ are complete,
and we get an $\R^{2n}$-action, $\R^{2n} \times M \to M$, via
\begin{equation*}
 \left( \sum _{i =1}^{2n} a_i\e _i , x\right) \mapsto c_x(1),	
\end{equation*}
where $c_x(r)$ is the integral curve of the vector field 
$\sum_{i=1}^n -a_i J \xi_i + a_{n+i} \xi_i$ with $c_x(0)=x$.
This action descends to a $(\C^*)^n$-action by biholomorphisms that extends
the given $(S^1)^n$-action.  
Finally, the action map $(\C^*)^n \times M \to M$ is holomorphic,
because its differential, which at the point $(z,m)$ 
is the map $\C^n \times T_mM \to T_{z \cdot m} M$
that takes $(2\pi (r_1+i\theta_1 , \ldots, r_n+i\theta_n) , v)$ to 
$\sum_j -r_j J\xi_j|_{z \cdot m} + \theta_j \xi_j|_{z \cdot m} + z_* v$,
is complex linear.

\begin{Remark}
In the next section we will see that if there exists a fixed point
then the extended $(\C^*)^n$-action is faithful.
In general,
the extended $(\C^*)^n$-action might not be faithful.
\end{Remark}

\begin{Example} \label{complex-Cn} 
Let $(S^1)^n$ act on $\C^n$ with weights $\alpha_1,\ldots,\alpha_n$:
\begin{equation*}
	g\cdot (z_1,\dots ,z_n) = 
  (g^{\alpha _1}z_1,\dots , g^{\alpha _n}z_n),
\end{equation*} 
where
 $g^{\alpha _i}=g_1^{\alpha _{i1}}\dots g_n^{\alpha _{in}}$ 
for $g=(g_1,\dots ,g_n) \in (S^1)^n$ 
and for the isotropy weight 
$\alpha _i=(\alpha _{i1},\dots ,\alpha _{in}) \in \Z^n$.
Then the complexified action is given by the same formula
applied to $g=(g_1,\dots ,g_n) \in (\C^*)^n$.
\end{Example}

%-------------------------------------------------------------------------
\section{Structures near fixed points}
\label{sec:Vp}
%-------------------------------------------------------------------------

Let $M$ be a connected complex manifold of complex dimension $n$.
Let the torus $(S^1)^n$ act on $M$ faithfully by biholomorphisms.
Let $p$ be a point in $M$ that is fixed by the $(S^1)^n$-action.
Let $\alpha_1,\ldots,\alpha_n$ be the isotropy weights at $p$.

Let $\C_{\alpha_i}$ denote the one dimensional complex vector space $\C$
with the $(S^1)^n$-action that is obtained by composing the homomorphism
$(S^1)^n \to S^1$ that is encoded by the weight $\alpha_i$ with the standard
action of $S^1$ on $\C$ by scalar multiplication.

We begin with a local result:

\begin{lemm}\label{lemm:local}
There exists an $(S^1)^n$-invariant neighbourhood $U_p$ of $p$ in $M$,
an $(S^1)^n$-invariant neighbourhood $\widetilde{U}_p$
of the origin in $T_pM$,
and an $(S^1)^n$-equivariant biholomorphism
$\varphi_p \colon U_p \to \widetilde{U}_p$
whose differential at $p$ is the identity map on $T_pM$.
\end{lemm}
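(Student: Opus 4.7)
The plan is to apply the classical averaging trick (Bochner linearization) for actions of a compact group. Since $p$ is fixed, the differentials $g_* := (dg)_p$ assemble into a linear holomorphic $(S^1)^n$-representation on $T_pM$, whose weights are the $\alpha_i$ listed in the hypothesis. The idea is to take an arbitrary local chart and symmetrize it against Haar measure, then apply the holomorphic inverse function theorem.

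First I would choose any holomorphic chart $\psi \colon V \to T_pM$ on some open neighbourhood $V$ of $p$ with $\psi(p)=0$ and $d\psi_p = \mathrm{id}_{T_pM}$; this is possible because $M$ is a complex manifold. Because $(S^1)^n$ is compact and acts continuously, $V_0 := \bigcap_{g \in (S^1)^n} g \cdot V$ is still an open neighbourhood of $p$, and it is $(S^1)^n$-invariant; on $V_0$ the expression $\psi(g \cdot x)$ is defined for every $g$. Define
\begin{equation*}
\varphi_p(x) := \int_{(S^1)^n} g_*^{-1}\bigl(\psi(g \cdot x)\bigr)\,dg,
\end{equation*}
where $dg$ is normalized Haar measure. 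The integrand lies in the finite-dimensional vector space $T_pM$, so the integral is well-defined.

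Three things then need to be checked. Equivariance $\varphi_p(h \cdot x) = h_* \varphi_p(x)$ for $h \in (S^1)^n$ follows from translation-invariance of $dg$ by the change of variables $g \mapsto g h^{-1}$. Holomorphicity of $\varphi_p$ follows because, for each fixed $g$, the integrand $x \mapsto g_*^{-1}\psi(g \cdot x)$ is a composition of the biholomorphism $g$, the chart $\psi$, and the complex-linear map $g_*^{-1}$, hence holomorphic; integration against a real parameter commutes with the Cauchy--Riemann operator. Finally, since $\psi$ is tangent to the identity at $p$, the chain rule gives $d(g_*^{-1} \circ \psi \circ g)_p = g_*^{-1} \circ \mathrm{id} \circ g_* = \mathrm{id}_{T_pM}$ for every $g$, so integration yields $d\varphi_p|_p = \mathrm{id}_{T_pM}$.

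By the holomorphic inverse function theorem, $\varphi_p$ restricts to a biholomorphism from some open neighbourhood of $p$ onto an open neighbourhood of $0$. Shrinking once more by intersecting over the torus produces an $(S^1)^n$-invariant neighbourhood $U_p$ on which $\varphi_p$ is still a biholomorphism, and $\widetilde U_p := \varphi_p(U_p)$ is automatically $(S^1)^n$-invariant by the equivariance already established. The argument is essentially bookkeeping around a standard averaging construction; there is no real obstacle. The only points requiring care are the double shrinking of the domain (once so that $\psi(g \cdot x)$ is defined uniformly in $g$, once after the inverse function theorem to restore invariance), both immediate from compactness of $(S^1)^n$.
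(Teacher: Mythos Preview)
Your proof is correct and follows essentially the same Bochner-linearization argument as the paper: start from an arbitrary holomorphic chart tangent to the identity, shrink to an invariant domain, average against Haar measure, then apply the inverse function theorem and shrink once more. The only cosmetic difference is that the paper writes the averaged integrand as $g\circ\varphi'\circ g^{-1}$ (and spells out why the intersection $\bigcap_g gU$ is open via properness of the action map), whereas you write it as $g_*^{-1}\circ\psi\circ g$; these agree after the substitution $g\mapsto g^{-1}$.
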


\begin{proof}
Let $\varphi \colon U \to \widetilde{U} \subseteq \C^n$ 
be a local holomorphic chart near $p$ with $\varphi (p)=0$. 
Identifying $\C^n$ with $T_pM$ via the differential
\begin{equation*}
  (d\varphi)_{p} \colon T_pM \to T_0\C^n \cong \C ^n,
\end{equation*}
we get a biholomorphism
\begin{equation*}
  \varphi' \colon U   \to  \widetilde{U}' \subseteq T_pM
\end{equation*}
whose differential at $p$ is the identity map on $T_pM$.
We want to obtain such a biholomorphism that is also equivariant.

Set
\begin{equation*}
   U' := \bigcap_{g \in (S^1)^n} gU.
\end{equation*}
Clearly, $U'$ is invariant and contains $p$.  We now show that $U'$ is open.
The complement of $U'$ is the image of the closed subset
$(S^1)^n \times (M \ssminus U)$ of $(S^1)^n \times M$ 
under the action map $(S^1)^n \times M \to M$.
Because $(S^1)^n$ is compact, the action map is proper.
Being proper means that the preimage of every compact set is compact;
when the target space $M$ is a manifold\footnote{
 In fact, it is enough to assume that the target space 
 is Hausdorff and compactly generated.
 Compactly generated means that a subset is closed if and only if
 its intersection with every compact set $K$ is closed in $K$;
 this property holds if the space is locally compact
 or if the space is metrizable.
}
it implies that the map is closed.
Thus, the complement $M \ssminus U'$ is closed, and so $U'$ is open. 

To obtain an equivariant chart, we average $\varphi'$: let
\begin{equation*}
 \widetilde {\varphi} := 
     \int_{g \in (S^1)^n} 
     (g \circ \varphi' \circ g^{-1}) \, dg \, \colon \, U' \, \to \, T_pM,
\end{equation*}
where $dg$ is Haar measure on $(S^1)^n$.
The map $\widetilde {\varphi}$ is holomorphic and $(S^1)^n$-equivariant.
Moreover, its differential at $p$ is the identity map on $T_pM$.
By the implicit function theorem, $\widetilde{\varphi}$
restricts to a biholomorphism from some smaller open neighbourhood $U''$ 
of~$p$ in~$M$ to an open neighbourhood of the origin in~$T_pM$.
The restriction of~$\widetilde{\varphi}$ to the invariant neighbourhood
$U_p := \bigcap\limits_{g \in (S^1)^n} g \cdot U''$ of~$p$ in~$M$
satisfies the requirements of the lemma.
\end{proof}

\begin{Corollary} \label{cor:polydisc}
There exists an $(S^1)^n$-equivariant local holomorphic chart
\begin{equation*} 
 \varphi_p \colon U_p \to \D^n 
\end{equation*}
from an invariant open neighbourhood $U_p$ of $p$
to a polydisc $\D^n$ in $\C_{\alpha_1} \oplus \ldots \oplus \C_{\alpha_n}$.
\end{Corollary}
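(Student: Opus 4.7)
The plan is to combine Lemma \ref{lemm:local} with an elementary observation about invariant neighbourhoods in a complex linear representation of a compact torus.

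First I would apply Lemma \ref{lemm:local} to obtain an $(S^1)^n$-equivariant biholomorphism $\varphi_p \colon U_p \to \widetilde{U}_p$ from an invariant neighbourhood of $p$ to an invariant neighbourhood of the origin in $T_pM$. Since each element of $(S^1)^n$ acts on $M$ by a biholomorphism fixing $p$, its derivative at $p$ is a complex linear automorphism of $T_pM$, so the isotropy representation on $T_pM$ is a complex linear representation of $(S^1)^n$ with weights $\alpha_1,\ldots,\alpha_n$. Hence one can choose a complex linear isomorphism of $(S^1)^n$-representations $T_pM \cong \C_{\alpha_1} \oplus \cdots \oplus \C_{\alpha_n}$, so that in these coordinates the action is diagonal.

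Next I would note that in these coordinates the $(S^1)^n$-action preserves the modulus $|z_i|$ of each coordinate, because each $\C_{\alpha_i}$ is acted on by scalars in $S^1$. Therefore every $(S^1)^n$-invariant open neighbourhood of the origin contains a polydisc $\{(z_1,\ldots,z_n) : |z_i|<r \text{ for all } i\}$ for some sufficiently small $r>0$. In particular, we can choose such an $r$ so that this polydisc lies inside $\widetilde{U}_p$, and then shrink $U_p$ to the preimage under $\varphi_p$ of this polydisc.

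Finally, I would compose with the rescaling $(z_1,\ldots,z_n) \mapsto (z_1/r,\ldots,z_n/r)$. This rescaling is a biholomorphism from the polydisc of radius $r$ onto the unit polydisc $\D^n$, and it is $(S^1)^n$-equivariant because it is complex linear and so commutes with the diagonal linear action. The composition is the desired equivariant chart $U_p \to \D^n \subset \C_{\alpha_1} \oplus \cdots \oplus \C_{\alpha_n}$. There is no real obstacle here: the content of the corollary is essentially the diagonalisability of the isotropy representation together with Lemma \ref{lemm:local}; the passage to a polydisc is just an exercise in shrinking and rescaling within an invariant neighbourhood.
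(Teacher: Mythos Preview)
Your proof is correct and follows essentially the same approach as the paper: identify $T_pM$ equivariantly with $\C_{\alpha_1}\oplus\cdots\oplus\C_{\alpha_n}$ via the definition of the isotropy weights, apply Lemma~\ref{lemm:local}, and restrict to the preimage of a polydisc. The paper's proof is just a terser version of yours; your additional remarks about moduli being preserved and the final rescaling to the unit polydisc merely spell out details the paper leaves implicit.
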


\begin{proof}
By the definition of the isotropy weights,
there exists a complex linear $(S^1)^n$-equivariant isomorphism
between the tangent space $T_pM$ and the representation
$\C_{\alpha_1} \oplus \ldots \oplus \C_{\alpha_n}$.
Corollary~\ref{cor:polydisc} then follows from Lemma~\ref{lemm:local}
by restricting the chart to the preimage of a polydisc.
\end{proof}

We would like to extend the chart of Corollary \ref{cor:polydisc}
to a chart whose image is all of $\C^n$.
We can do this when the $(S^1)^n$ extends to a $(\C^*)^n$-action;
for example, if the manifold is compact;
by ``sweeping" by the $(\C^*)^n$-action.

\begin{Lemma} \label{lemm:Vp}
Suppose that the $(S^1)^n$-action extends to a $(\C^*)^n$-action.
Then there exists an invariant open neighbourhood $V_p$ of $p$ in $M$
and an $(S^1)^n$-equivariant biholomorphism of $V_p$
with $\C_{\alpha_1} \oplus \ldots \oplus \C_{\alpha_n}$. 
\end{Lemma}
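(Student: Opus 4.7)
The plan is to use the $(\C^*)^n$-action to sweep the equivariant polydisc chart of Corollary~\ref{cor:polydisc} into a global equivariant biholomorphism. Write $W := \C_{\alpha_1} \oplus \ldots \oplus \C_{\alpha_n}$ and let $\varphi_p \colon U_p \to \D^n \subset W$ denote the chart of that corollary. A preliminary remark is that $\alpha_1,\ldots,\alpha_n$ span $\R^n$: any one-parameter subgroup of $(S^1)^n$ lying in the common kernel of every $\alpha_i$ would act trivially on $T_pM$, hence on $U_p$ by Lemma~\ref{lemm:local}, hence on the connected manifold $M$ by the identity theorem, contradicting faithfulness. In particular $(\C^*)^n \cdot \D^n = W$.

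Set $V_p := (\C^*)^n \cdot U_p$ and define
\[
\Phi \colon W \to V_p, \qquad \Phi(g \cdot z) := g \cdot \varphi_p^{-1}(z), \quad g \in (\C^*)^n,\ z \in \D^n.
\]
The crucial step is well-definedness: if $g_1 \cdot z = g_2 \cdot z'$ with $z, z' \in \D^n$, set $h := g_2^{-1} g_1$, so $z' = hz \in \D^n$, and we must verify $\varphi_p^{-1}(hz) = h \cdot \varphi_p^{-1}(z)$. I would first show that $B_z := \{h' \in (\C^*)^n : h' z \in \D^n\}$ is connected: the map $\log|\cdot| \colon (\C^*)^n \to \R^n$ sends $B_z$ onto the convex set $\{t \in \R^n : \langle \alpha_i, t \rangle < -\log|z_i| \text{ for all } i \text{ with } z_i \neq 0\}$, and its fibres are the connected tori $(S^1)^n$. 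The two holomorphic maps $h' \mapsto \varphi_p^{-1}(h' z)$ and $h' \mapsto h' \cdot \varphi_p^{-1}(z)$ from $B_z$ to $M$ agree on $(S^1)^n \subset B_z$ by the $(S^1)^n$-equivariance of $\varphi_p^{-1}$, and in fact satisfy the same holomorphic ODE on all of $B_z$ because $\varphi_p$ intertwines $\xi_i$ (by $(S^1)^n$-equivariance) and also $-J\xi_i$ (since $\varphi_p$ is holomorphic, so $d\varphi_p \circ J = J \circ d\varphi_p$); the identity theorem then forces them to coincide on $B_z$.

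Given well-definedness, $\Phi$ is holomorphic, $(\C^*)^n$-equivariant, and surjective onto $V_p$, and the local formula $\Phi(w) = g_0 \cdot \varphi_p^{-1}(g_0^{-1} w)$, valid near any $w_0 \in W$ for a fixed $g_0$ with $g_0^{-1} w_0 \in \D^n$, shows that $\Phi$ is a local biholomorphism. For injectivity, pick $\lambda \in \R^n$ with $\langle \alpha_i, \lambda \rangle > 0$ for all $i$ (possible since $\alpha_1, \ldots, \alpha_n$ span $\R^n$), yielding a contracting one-parameter subgroup $\sigma \colon \R_{>0} \to (\C^*)^n$, $\sigma(t) := (t^{\lambda_1}, \ldots, t^{\lambda_n})$, for which $\sigma(t) \cdot w \to 0$ in $W$ as $t \to 0^+$ and, by equivariance of $\Phi$ together with continuity at $0$, $\sigma(t) \cdot y \to p$ in $V_p$ for every $y \in V_p$. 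Note $\Phi^{-1}(p) = \{0\}$: if $g \cdot \varphi_p^{-1}(z) = p$ then $\varphi_p^{-1}(z) = g^{-1} \cdot p = p$, hence $z = 0$ and $w = g \cdot 0 = 0$. Now if $\Phi(w_1) = \Phi(w_2)$, applying $\sigma(t)$ gives $\Phi(\sigma(t) w_1) = \Phi(\sigma(t) w_2)$, and for small $t > 0$ both $\sigma(t) w_i$ lie in a neighbourhood of $0 \in W$ on which $\Phi$ is injective (such a neighbourhood exists because $\Phi$ is a local biholomorphism with $\Phi^{-1}(p) = \{0\}$); so $\sigma(t) w_1 = \sigma(t) w_2$ for small $t$, and hence $w_1 = w_2$ by invertibility of $\sigma(t)$. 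Thus $\Phi$ is a biholomorphism, and its inverse is the desired equivariant biholomorphism $V_p \to W$.

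I expect the main obstacle to be the well-definedness step, which bundles two nontrivial ingredients---the connectedness of $B_z$ (a toric convexity fact) and the intertwining of all infinitesimal $(\C^*)^n$-generators by $d\varphi_p$ (using holomorphicity to upgrade $(S^1)^n$-equivariance)---into a single identity-theorem application. Once well-definedness is in place, the contracting-flow argument for injectivity and the rest of the verification are formal.
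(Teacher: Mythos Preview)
Your argument is correct and rests on the same two key observations as the paper's---that a holomorphic $(S^1)^n$-equivariant chart automatically intertwines the full $(\C^*)^n$ infinitesimal generators, and that a contracting one-parameter subgroup controls the global behaviour---but you organise them differently. The paper chooses a \emph{single} one-parameter subgroup $g_t \in (\C^*)^n$, namely the one acting on $W$ as scalar multiplication by $e^{-t}$, and exploits the nesting $g_t U_p \subset U_p$ for $t \geq 0$: the conjugated charts $\varphi_p^{(t)} := g_{-t} \circ \varphi_p \circ g_t \colon g_{-t}U_p \to e^t\D^n$ form an increasing chain of compatible biholomorphisms whose union is the desired map $V_p \to W$. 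In that formulation well-definedness, surjectivity, and injectivity all fall out of the nested structure at once. You instead sweep by the \emph{entire} group $(\C^*)^n$, which forces you to supply two extra ingredients separately: the convexity argument for connectedness of $B_z$ (to get well-definedness) and the contracting-flow argument near the fixed point (to get injectivity). Both are fine, and your version makes explicit some structure that the paper's radial shortcut hides; the paper's route is shorter because the single subgroup $g_t$ already sees the whole of $W$ as $\bigcup_t e^t\D^n$, so no further gluing or convexity is needed.
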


\begin{proof}
Let $\varphi _p \colon U_p \to \D ^n$
be an $(S^1)^n$-equivariant holomorphic local chart,
as in Corollary~\ref{cor:polydisc}. 
Because $\varphi_p$ is $(S^1)^n$-equivariant and holomorphic,
it intertwines the restriction to $U_p$ of the vector fields
that generate the complexified $(\C^*)^n$-action on $M$
with the restriction to $\D^n$ of the vector fields that generate
the complexified $(\C^*)^n$-action on $\C^n=\C _{\alpha _1}\oplus \dots \oplus \C _{\alpha _n}$.
This, and the fact that $\varphi_p$ is a diffeomorphism between $U_p$
and $\D^n$, implies that $\varphi_p$ also intertwines the partial
flows on $U_p$ and on $\D^n$ that are generated by these vector fields;
in particular it intertwines the domains of definition of these partial flows.

For each $t \in \R$, let $g_t$ be the element of $(\C^*)^n$
that acts on $\C^n$ as scalar multiplication by $e^{-t}$,
and let $\eta \in \Lie(\C^*)^n$ be the generator of the one-parameter
subgroup $t \mapsto g_t$.
Because $e^{-t} \D^n \subset \D^n$ for all $t \geq 0$,
and because $\varphi_p$ intertwines the domains of definition
of the partial flows on $U_p$ and on $\D^n$ that correspond to $\eta$,
we get that $g_t U_p \subset U_p$ for all $t \geq 0$.
So, for every $t \geq 0$, the domain of definition of the 
$(S^1)^n$-equivariant biholomorphism
$$ \varphi_p^{(t)} := (g_t)^{-1} \circ \varphi_p \circ g_t
 \ \colon \ g_{-t} U_p \to e^t \D^n  $$
contains $U_p$.  
Here, $g_t \colon g_{-t} U_p \to U_p$
and $(g_t)^{-1} \colon \D^n \to e^t \D^n$
are given by the complexified actions on $M$ and on $\C^n$.
By the choice of $g_t$, the latter map is multiplication by $e^{t}$.

Moreover, because $\varphi_p$ intertwines the partial flows
that correspond to $\eta$
and these partial flows are defined for all $t \geq 0$,
the restriction to $U_p$ of $\varphi_p^{(t)}$
coincides with $\varphi_p$ for all $t \geq 0$.
Substituting $t-s$ instead of $t$, 
we get that the maps $\varphi_p^{(t)}$ and $\varphi_p^{(s)}$
agree whenever they are both defined.
Thus, all these maps fit together into a map
$$ \bigcup\limits_{t \geq 0} \varphi_p^{(t)}
 \colon V_p \to \C_{\alpha_1} \oplus \ldots \oplus \C_{\alpha_n}   ,$$
where $V_p = \bigcup_{t \geq 0} g_{-t} U_p$.
This map is onto, because its image is the union of the sets $e^t \D^n$ 
over all $t \geq 0$.
The map is one to one, because it is one to one on each $g_{-t} U_p$,
and for every two points in the domain there exists a $t \geq 0$ 
such that the points are both in $g_{-t} U_p$.
Because $V_p$ is covered by $(S^1)^n$-invariant open sets $g_{-t} U_p$
on which the map is an $(S^1)^n$-equivariant biholomorphism,
we deduce that the map is itself an $(S^1)^n$-equivariant biholomorphism,
as required. 
\end{proof}

%------------------------
\section{Obtaining a fan}
\label{sec:fan}
%------------------------

Let $M$ be a 
connected complex manifold of complex dimension $n$,
let the torus $(S^1)^n$ act on $M$ faithfully by biholomorphisms,
and assume that this action extends to a holomorphic $(\C^*)^n$-action.
The set of fixed points is discrete; assume that it is nonempty and finite. 

In Lemma~\ref{lemm:Vp} we assigned to every fixed point $p$ in $M$
an open subset $V_p$ that is biholomorphic to $\C^n$.  
By assumption, there exists at least one fixed point. So the union
$X$ of the sets $V_p$ over these fixed points,
\begin{equation*}
    X := \bigcup_{p \in M^{(S^1)^n}} V_p,
\end{equation*}
is nonempty. 

\begin{Remark} \label{connected}
In Section \ref{sec:compact} we show that if $M$ is compact and connected 
then the union $X$ of the sets $V_p$ is all of $M$.
The proof relies on the results of Sections~\ref{sec:fan}
and~\ref{sec:isomorphism}.
\end{Remark}

By its definition,
$X$ is a $(\C ^*)^n$-invariant open submanifold of $M$. 
Moreover, we claim that there exists a unique open $(\C^*)^n$ orbit
in $M$, this orbit and free and is dense in $M$, and it coincides with the
free $(\C^*)^n$ orbit in $V_p$ for each $p$.
To see this, we consider the fundamental vector fields 
$\xi_1,\dots ,\xi_n$ of the $(S^1)^n$-action with respect to the coordinate
one-dimensional subtori. We think of them as holomorphic sections 
$M \to T^{1,0}M \cong TM$ 
of the holomorphic tangent bundle $T^{1,0}M$ of $M$. 
The $n$-th exterior product $\bigwedge ^nT^{1,0}M \to M$ 
is a holomorphic line bundle 
and $\xi_1\wedge \dots \wedge \xi _n$ is a 
holomorphic section of this line bundle.
A point $x \in M$ belongs to an open $(\C ^*)^n$ orbit
if and only if 
$(\xi_1\wedge \dots \wedge \xi_n)(x)$ is not zero. 
This means that the union of 
the open $(\C ^*)^n$ orbits is the complement 
of the zero locus of a holomorphic section.
Because the zero locus is a complex analytic subvariety of $M$
and $M$ is connected, the union of the open $(\C^*)^n$ orbits
is either empty, or it is open, dense, and connected.
The claim then follows from the facts that 
there exists at least one $V_p$,
it contains a free and open $(\C^*)^n$ orbit,
and every two distinct orbits are disjoint.

In particular, $X$ is connected and dense in $M$. 

The connected components
of the fixed point sets of the circle subgroups of $(S^1)^n$
are closed complex submanifolds of $X$.  If such a submanifold
has complex codimension one, then, in analogy with the toric topology
literature, we call it a \emph{characteristic submanifold} of $X$
(cf.~\cite[p.~240]{masuda}).

Because $X$ is a union of finitely many $V_p$s
and each $V_p$ has only finitely many characteristic submanifolds,
there are only finitely many characteristic submanifolds in $X$.  
Denote them $$ X_1 , \ldots , X_m .$$

Let $T_i$ be the subgroup of $T$ that fixes $X_i$.
If a compact group acts faithfully on a connected manifold
then at every fixed point the linear isotropy representation is faithful.
Therefore, the linear isotropy representation of $T_i$
at any point $q$ of $X_i$ is faithful.   
Because $T_i$ acts holomorphically and fixes $X_i$, 
we get a faithful representation of $T_i$ on the one dimensional 
complex space $T_q X / T_q X_i$.
This gives an injection $T_i \to S^1$, where $S^1$ acts on $T_qX/T_qX_i$
by scalar multiplication.  
By continuity, this injection is independent of the choice of point $q$
in $X_i$.
Because, by assumption, $T_i$ contains a circle subgroup of $T$,
this injection is an isomorphism.  Let
$$ \lambda_i \colon S^1 \to T_i \subset (S^1)^n $$
be the inverse of this isomorphism, composed with the inclusion map 
into $(S^1)^n$.

We define an abstract simplicial complex:
\begin{equation*}
   \Sigma := \left\{ I \subseteq \{ 1,\dots ,m\} 
 \ \big| \ \bigcap_{i \in I} X_i \neq \emptyset \right \}.
\end{equation*}
To each simplex $I \in \Sigma$ we assign the cone 
\begin{equation*}
   C_I := \pos (\lambda _i \, \mid \, i \in I) := 
   \left\{ \sum_{i \in I} a_i\lambda _i \ \big| \ a_i \geq 0\right\}
\end{equation*}
in $\Lie (S^1)^n$. 

\begin{Example} \label{Cn}
Take $\C^n$ with coordinates $z_1, \ldots, z_n$.
Let $(S^1)^n$ act on it with weights $\alpha_1,\ldots,\alpha_n
 \in \Hom((S^1)^n,S^1) \subset (\Lie(S^1)^n)^*$.  
Suppose that the action is faithful; then $\alpha_1,\ldots,\alpha_n$
are a $\Z$-basis of $\Hom((S^1)^n,S^1)$.  
The characteristic submanifolds are the coordinate hyperplanes
$ \{ z_i = 0 \} $ for $i=1,\ldots,n$.
The homomorphisms $\lambda_1, \ldots, \lambda_n$ 
are the basis to $\Hom(S^1,(S^1)^n) \subset \Lie(S^1)^n$
that is dual to $\alpha_1,\ldots,\alpha_n$.
\end{Example}

Recall that
a cone in $\Lie (S^1)^n$ is \emph{unimodular}
if it is generated by part of a $\Z$-basis of $\Hom (S^1,(S^1)^n)$.

Returning to our general case -- 

\begin{Lemma}
The cones $C_I$, for $I \in \Sigma$, are unimodular.
\end{Lemma}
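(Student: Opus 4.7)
The strategy is to reduce unimodularity of $C_I$ to the explicit description of the charts $V_p$ furnished by Lemma \ref{lemm:Vp}. First, choose any point $q$ in the nonempty intersection $\bigcap_{i \in I} X_i$; since each $X_i$ is contained in $X = \bigcup V_p$, the point $q$ lies in some chart $V_p$. Via the equivariant biholomorphism $V_p \cong \C_{\alpha_1}\oplus\cdots\oplus\C_{\alpha_n}$ of Lemma \ref{lemm:Vp}, the characteristic submanifolds of $V_p$ are precisely the $n$ coordinate hyperplanes, each of which is an open subset of a unique global characteristic submanifold $X_j$ of $X$. This produces an injection $\sigma\colon\{1,\ldots,n\}\to\{1,\ldots,m\}$, and $I = \sigma(J)$ for the subset $J\subseteq\{1,\ldots,n\}$ consisting of those coordinate indices that vanish at $q$. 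In particular $|I|\leq n$.

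Next, I apply Example \ref{Cn} to $V_p$. Because the $(S^1)^n$-action on $M$ is faithful, so is its restriction to the linear chart $V_p$; hence the isotropy weights $\alpha_1,\ldots,\alpha_n$ at $p$ form a $\Z$-basis of $\Hom((S^1)^n,S^1)$, and the cocharacters attached to the $n$ coordinate hyperplanes of $V_p$ are the dual $\Z$-basis, which I denote $\lambda_1^p,\ldots,\lambda_n^p$, of $\Hom(S^1,(S^1)^n)$.

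It remains to identify, for each $j \in J$, the globally defined cocharacter $\lambda_{\sigma(j)}$ with the local dual-basis element $\lambda_j^p$. Both are obtained by inverting the faithful $S^1$-representation of the codimension-one isotropy subtorus on the normal line to the relevant submanifold at $q$; since $X_{\sigma(j)}$ and the $j$-th coordinate hyperplane of $V_p$ coincide in a neighbourhood of $q$, the two normal representations agree, and so do the resulting cocharacters. Therefore $C_I = \pos\bigl(\lambda_j^p \,\big|\, j \in J\bigr)$ is positively spanned by a subset of a $\Z$-basis of $\Hom(S^1,(S^1)^n)$, which is precisely the definition of unimodularity. The only mildly delicate point is this final matching of the intrinsic global $\lambda_i$ with the concrete local dual-basis element $\lambda_j^p$; everything else follows directly from Example \ref{Cn} applied to the chart $V_p$.
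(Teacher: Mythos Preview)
Your argument is correct and follows exactly the paper's approach: pick $q\in\bigcap_{i\in I}X_i$, locate it in some $V_p$, and invoke Example~\ref{Cn} on the linear chart; you simply spell out in detail the identification of the global cocharacters $\lambda_i$ with the local dual-basis elements that the paper leaves implicit. One harmless imprecision: with $q$ chosen merely in $X_I$ rather than in $X_I^0$, you only get $I\subseteq\sigma(J)$, not equality, but since all you need is $\{\lambda_i:i\in I\}\subseteq\{\lambda_j^p:1\le j\le n\}$ the conclusion is unaffected.
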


\begin{proof}
Let $I \in \Sigma$.  By the definition of $\Sigma$, this means
that the intersection $\bigcap_{i \in I} X_i$ is nonempty.
Let $q$ be a point in this intersection.  Let $p$ be a fixed point
such that $q \in V_p$.  Because $V_p$ is isomorphic 
to some $\C_{\alpha_1} \oplus \ldots \oplus \C_{\alpha_n}$
on which the action is faithful,
the lemma follows from Example~\ref{Cn}.
\end{proof}

Fix a point $q$ in the free $(\C^*)^n$ orbit in $X$.
For any $\xi \in \Lie (S^1)^n$, consider the curve 
$$ c_q^\xi \colon \R \to X $$
that is given by
$$ c_q^\xi(r) := \exp(-rJ\xi) \cdot q \qquad \text{ for } r \in \R $$
where $\exp \colon \Lie(\C^*)^n \to (\C^*)^n$ is the exponential map
and where $J$ denotes multiplication by $i$ in $\Lie(\C^*)^n$.

Denote by $C_I^0$ the relative interior of the cone $C_I$.
Denote 
$$ X_I = \bigcap _{i\in I}X_i \quad \text{and} \quad 
   X_I^0 = \bigcap_{i \in I} X_i \ssminus \bigcup_{j \not\in I} X_j. $$

\begin{Lemma} \label{lemm:limitX}
Let $\xi \in \Lie(S^1)^n$ and $I \in \Sigma$.
Then $\xi \in C_I^0$ if and only if the curve $c_q^\xi(r)$
converges as $r \to -\infty$ to a point $q'$ in $X_I^0$.
Moreover, in this case the limit point $q'$
belongs to $V_p$ for every $p$ such that $V_p \cap X_I \neq \emptyset$.
\end{Lemma}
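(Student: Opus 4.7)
The strategy is to reduce everything to an explicit diagonal computation inside a single $V_p$ via Lemma~\ref{lemm:Vp}, using $(\C^*)^n$-equivariance to translate $q$ into $V_p$.

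Fix a fixed point $p$ with $V_p \cap X_I \neq \emptyset$; such $p$ exists since $I \in \Sigma$ implies $X_I \neq \emptyset$ and $X_I \subset X = \bigcup_p V_p$. Identify $V_p$ with $\bigoplus_j \C_{\alpha^p_j}$ using Lemma~\ref{lemm:Vp}; the biholomorphism intertwines the infinitesimal generators of the complexified actions, hence is $(\C^*)^n$-equivariant and $V_p$ is $(\C^*)^n$-invariant. By Example~\ref{Cn}, the characteristic submanifolds of $M$ meeting $V_p$ correspond bijectively to the $n$ coordinate hyperplanes of $V_p$: for $i \in I$, let $j(i) \in \{1, \ldots, n\}$ be the coordinate with $X_i \cap V_p = \{z_{j(i)} = 0\}$; then $\lambda_i$ is dual to $\alpha^p_{j(i)}$. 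Since the unique free $(\C^*)^n$-orbit is dense and meets $V_p$, write $q = t_0 \cdot \tilde q$ with $t_0 \in (\C^*)^n$ and $\tilde q \in V_p$ having no zero coordinates; equivariance gives $c_q^\xi(r) = t_0 \cdot c_{\tilde q}^\xi(r)$, and the diagonal form of the complexified action yields
$$ c_{\tilde q}^\xi(r) = \bigl(e^{2\pi r\, \alpha^p_1(\xi)} \tilde q_1, \ldots, e^{2\pi r\, \alpha^p_n(\xi)} \tilde q_n\bigr). $$

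For the ``only if'' direction, assume $\xi = \sum_{i \in I} a_i \lambda_i$ with $a_i > 0$. By duality, $\alpha^p_{j(i)}(\xi) = a_i > 0$ for $i \in I$ and $\alpha^p_j(\xi) = 0$ for every $j$ not of the form $j(i)$. As $r \to -\infty$, the coordinates indexed by $j(i)$ decay to zero and the remaining coordinates stay constant, so $c_{\tilde q}^\xi(r)$ converges to the point $\tilde q'$ whose vanishing coordinates are exactly $\{j(i) : i \in I\}$. This $\tilde q'$ lies on every $X_i$ with $i \in I$ and off every other characteristic submanifold (those meeting $V_p$ by the coordinate check, those disjoint from $V_p$ vacuously), so $\tilde q' \in X_I^0$, and $c_q^\xi(r) \to t_0 \cdot \tilde q' \in V_p \cap X_I^0$. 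Running this argument for every $p'$ with $V_{p'} \cap X_I \neq \emptyset$ gives the ``moreover'' clause.

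For the converse, suppose $c_q^\xi(r) \to q' \in X_I^0$. Choose $p$ with $q' \in V_p$; then $V_p \cap X_I \neq \emptyset$ and, since $V_p$ is open, the curve lies in $V_p$ for sufficiently negative $r$. The diagonal formula then forces $\alpha^p_j(\xi) \geq 0$ for every $j$, and the requirement $q' \in X_I^0$ pins the vanishing coordinates of the limit $t_0^{-1} \cdot q'$ down to precisely $\{j(i) : i \in I\}$. Hence $\alpha^p_{j(i)}(\xi) > 0$ for $i \in I$ and $\alpha^p_j(\xi) = 0$ otherwise, and expanding $\xi$ in the dual basis gives $\xi \in C_I^0$. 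The main obstacle is not any single calculation but the bookkeeping between the global cone $C_I$ and the coordinate structure of $V_p$; once Example~\ref{Cn} supplies the dictionary $\lambda_i \leftrightarrow \alpha^p_{j(i)}$, the rest is routine.
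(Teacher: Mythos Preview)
Your proof is correct and follows essentially the same strategy as the paper: reduce to an explicit diagonal computation in a chart $V_p \cong \C_{\alpha_1} \oplus \cdots \oplus \C_{\alpha_n}$ and read off the sign conditions on $\langle \xi, \alpha_j \rangle$. The only cosmetic difference is that the paper normalizes the chart so that $z_i(q)=1$ for all $i$ (which is possible because the free $(\C^*)^n$-orbit is already contained in every $V_p$), whereas you introduce an auxiliary translation $t_0$; your $t_0$ is in fact unnecessary since $q \in V_p$ automatically, but it does no harm.
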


\begin{proof}
Suppose that $\xi \in C_I^0$.
By the definition of $\Sigma$, \ $X_I$ is nonempty.  
Let $p$ be such that $V_p$ meets $X_I$.
Without loss of generality assume that $I = \{ 1, \ldots, k \}$
and that the characteristic submanifolds that meet $V_p$ are $X_1,\ldots,X_n$.
Let $\alpha_1,\ldots,\alpha_n$ denote the isotropy weights at $p$.
The assumption that $\xi \in C_I^0$ 
exactly means that $\langle \xi , \alpha_i \rangle$
is positive for $i=1,\ldots,k$ and zero for $i=k+1,\ldots,n$. 
Fix an isomorphism
$(z_1,\ldots,z_n) \colon V_p \to \C^n 
 = \C_{\alpha_1} \oplus \ldots \oplus \C_{\alpha_n}$
such that $z_i(q) = 1$ for all $i$.
In these coordinates, the curve $c_q^\xi(r)$ is represented as
\begin{equation*}
(z_1,\dots ,z_n)(c_q(r)) = 
 (e^{2 \pi r\langle \xi, \alpha_1\rangle}, \dots, 
  e^{2 \pi r\langle \xi, \alpha_n\rangle}).
\end{equation*}
As $r$ approaches $-\infty$, the curve in $\C^n$ approaches
the point $(\underbrace{0,\ldots,0}_k,\underbrace{1,\ldots,1}_{n-k})$.
On the other hand, the coordinates take each intersection $V_p \cap X_i$
to the coordinate hyperplane 
$\{ (z_1,\ldots,z_n) \ | \ z_i = 0 \}$,
and they take the intersection $V_p \cap X_I^0$
to the set
$\{ (z_1,\ldots,z_n) \ | \ z_i = 0 \text{ iff } 1 \leq i \leq k \}$.
So the curve approaches a point in $V_p \cap X_I^0$, as required.

Now suppose that the curve $c_q^\xi(r)$
converges as $r \to -\infty$ to a point in $X_I^0$.
Let $p$ be such that this limit point is contained in $V_p$.
As before, without loss of generality assume that $I = \{ 1, \ldots, k \}$
and that the characteristic submanifolds that meet $V_p$ 
are exactly $X_1, \ldots, X_n$;
fix an isomorphism
$(z_1,\ldots,z_n) \colon V_p \to \C^n 
   = \C_{\alpha_1} \oplus \ldots \oplus \C_{\alpha_n}$
such that $z_i(q) = 1$ for all $i$; the curve $c_q^\xi(r)$ is represented as
$ (z_1,\dots ,z_n)(c_q(r)) = 
  (e^{2 \pi r\langle \xi, \alpha_1\rangle}, \dots, 
   e^{2 \pi r\langle \xi, \alpha_n\rangle})$.
Because the curve approaches a limit as $r \to -\infty$,
the pairings $\langle \xi , \alpha_i \rangle$ are nonnegative
for all $i = 1 , \ldots , n$.
Because this limit is in $X_I^0$, the pairings 
are positive for every $i \in I$ and they vanish 
for every $i \in \{ 1, \ldots, n\} \ssminus I$.
Thus, $\xi \in C_I^0$ as required. 
\end{proof}

\begin{Corollary}
\begin{enumerate}
\item
For every $I, J \in \Sigma$, if $I \neq J$, 
then $C_I^0 \cap C_J^0 = \emptyset$.
\item
For every $I,J \in \Sigma$, 
$$C_I \cap C_J = C_{I \cap J}.$$
\item
The collection of cones 
\begin{equation*}
   \Delta: = \left\{ \, C_I \ \big| \ I \in \Sigma \, \right\}
\end{equation*}
is a fan, that is, every face of every cone in $\Delta$
is itself in $\Delta$, and the intersection of every two cones in $\Delta$
is a common face. 
\end{enumerate}
\end{Corollary}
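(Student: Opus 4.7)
The plan is to reduce all three parts to Lemma~\ref{lemm:limitX} together with two preliminary observations: that $\Sigma$ is closed under taking subsets, and that each $C_I$ is generated by an $\R$-linearly independent set. The first is immediate from the inclusion $\bigcap_{i \in I'} X_i \supseteq \bigcap_{i \in I} X_i$; the second follows from unimodularity of $C_I$, which was just proved. Together these imply that, for any $I \in \Sigma$, the cone $C_I$ decomposes as the disjoint union of the relative interiors $C_{I'}^0$ over subsets $I' \subseteq I$, and each such $I'$ lies in $\Sigma$. This decomposition is the bookkeeping device that makes everything else work.

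For part (1), suppose $\xi \in C_I^0 \cap C_J^0$. Applying Lemma~\ref{lemm:limitX} twice, the curve $c_q^\xi(r)$ converges as $r \to -\infty$ to a point of $X_I^0$ and also to a point of $X_J^0$. Since $M$ is Hausdorff these limits coincide, and since a point of $X_K^0$ lies in $X_i$ precisely when $i \in K$, the index set $K$ is recovered from any of its points. Hence $I = J$.

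For part (2), the inclusion $C_{I \cap J} \subseteq C_I \cap C_J$ is immediate. Conversely, take $\xi \in C_I \cap C_J$; the preliminary decomposition yields unique subsets $I' \subseteq I$ and $J' \subseteq J$ with $\xi \in C_{I'}^0 \cap C_{J'}^0$, and part (1) forces $I' = J' \subseteq I \cap J$, so $\xi \in C_{I'} \subseteq C_{I \cap J}$. Part (3) then follows at once: every face of $C_I$ is of the form $C_{I'}$ with $I' \subseteq I$, hence lies in $\Delta$ by the simplicial-complex property, and part (2) exhibits $C_I \cap C_J$ as the common face $C_{I \cap J}$. The only step with any real content is the uniqueness of the limit in Lemma~\ref{lemm:limitX}; once that is used to identify the index set from the limiting fixed-stratum, the rest is combinatorial.
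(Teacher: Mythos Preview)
Your argument is correct and follows essentially the same route as the paper's proof: the paper also deduces part~(1) from Lemma~\ref{lemm:limitX} via the disjointness of the strata $X_I^0$, proves part~(2) by passing to the unique face-interiors $C_{I'}^0$, $C_{J'}^0$ containing $\xi$ and invoking part~(1), and reads off part~(3) from part~(2). You have simply made explicit the two preliminary observations (that $\Sigma$ is closed under subsets and that the $\lambda_i$ with $i\in I$ are linearly independent) that the paper uses tacitly.
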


\begin{proof}
Part (1) follows from Lemma~\ref{lemm:limitX}
because the sets $X_I^0$ are disjoint.
Part (3) follows from Part (2).

For Part (2), we only need to show the inclusion
$C_I \cap C_J \subseteq C_{I \cap J}$,
because the opposite inclusion is trivial.
Let $\xi \in C_I \cap C_J$.  Let $I' \subset I$ and $J' \subset J$
be the subsets such that $\xi \in C_{I'}^0$ and $\xi \in C_{J'}^0$.
Then $C_{I'}^0 \cap C_{J'}^0 \neq \emptyset$.
By Part (1), $I' = J'$.  Let $L = I' = J'$.
Then $L \subset I \cap J$, and $\xi \in C_{L}^0 \subset C_{I \cap J}$.
\end{proof}

\begin{Lemma} \label{XI-connected}
For every $I \in \Sigma$, the set $X_I$ is an
$(S^1)^n$-invariant smooth closed complex submanifold of $X$
of complex codimension $|I|$,
it is connected, and it contains a fixed point.
\end{Lemma}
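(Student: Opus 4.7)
The plan is to reduce all four claims to the linear picture inside each $V_p$ provided by Lemma~\ref{lemm:Vp} and Example~\ref{Cn}, using Lemma~\ref{lemm:limitX} to glue the local pieces across different $V_p$'s.

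First, I would describe $V_p \cap X_I$ for each fixed point $p$. By Lemma~\ref{lemm:Vp}, $V_p$ is $(S^1)^n$-equivariantly biholomorphic to $\C_{\alpha_1}\oplus \dots \oplus \C_{\alpha_n}$; since the action is faithful, the $\alpha_i$ form a $\Z$-basis and the characteristic submanifolds meeting $V_p$ are exactly the $n$ coordinate hyperplanes. Distinct $X_i$'s meeting the same $V_p$ must correspond to distinct circle subgroups $T_i$, because the fixed set $V_p^{T_i}$ is a connected coordinate hyperplane and hence meets only one connected component of $X^{T_i}$. Therefore, whenever $V_p \cap X_I \neq \emptyset$, the set $V_p \cap X_I$ is the transverse intersection of $|I|$ distinct coordinate hyperplanes in $V_p \cong \C^n$, i.e., a complex linear subspace of complex codimension $|I|$ passing through $p$. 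From this one reads off smoothness, closedness in $X$, complex codimension $|I|$, and the fact that $p \in X_I$ whenever $V_p \cap X_I \neq \emptyset$; the $(S^1)^n$-invariance is automatic since each $X_i$ is invariant, and the existence of such a $p$ follows from $X_I \neq \emptyset$ together with $X_I \subseteq X = \bigcup_p V_p$, providing the required fixed point.

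The main step is connectedness. Fix any $\xi \in C_I^0$. By Lemma~\ref{lemm:limitX}, the curve $c_q^\xi(r)$ converges as $r \to -\infty$ to a point $q' \in X_I^0$, and crucially $q'$ lies in \emph{every} $V_p$ with $V_p \cap X_I \neq \emptyset$. Given any $y \in X_I$, choose $p$ with $y \in V_p$; then $V_p \cap X_I$ is nonempty, so $p \in X_I$ and $q' \in V_p$. By the first paragraph, $V_p \cap X_I$ is a complex linear subspace of $V_p \cong \C^n$ containing both $y$ and $q'$, hence path-connected; this yields a path from $y$ to $q'$ inside $X_I$. Since $y$ was arbitrary, $X_I$ is path-connected.

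The main obstacle I anticipate is not any single step but the bookkeeping required to pass between the global objects $X_i$ and their descriptions in individual charts $V_p$. The ``moreover'' clause of Lemma~\ref{lemm:limitX} is the decisive input that bridges the two viewpoints: it produces a single distinguished point $q'$ lying simultaneously in every $V_p$ meeting $X_I$, exactly what is needed to patch the locally connected pieces $V_p \cap X_I$ into a connected whole.
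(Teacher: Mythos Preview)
Your proposal is correct and follows essentially the same approach as the paper's own proof: both reduce the submanifold properties to the local linear model in each $V_p$, and both establish connectedness by invoking the ``moreover'' clause of Lemma~\ref{lemm:limitX} to produce a single point $q'$ lying in every $V_p$ that meets $X_I$, so that the connected pieces $V_p \cap X_I$ all share $q'$. The paper's write-up is slightly more terse (it does not spell out why distinct $X_i$'s meeting a given $V_p$ correspond to distinct coordinate hyperplanes, which you do justify), but the structure and key inputs are the same.
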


\begin{proof}
Fix $I \in \Sigma$.  

Because each of the sets $X_i$, for $i \in I$, is closed in $X$,
so is the intersection $X_I$ of these sets.

Because $X$ is the union of open subsets $V_p$,
and because every intersection $V_p \cap X_I$
is an $(S^1)^n$-invariant complex submanifold of codimension $|I|$ in $V_p$,
we deduce that $X_I$ is itself an $(S^1)^n$-invariant complex submanifold
of codimension $|I|$ in $X$.
It remains to show that $X_I$ is connected and contains a fixed point. 

Choose any $\xi \in C_I^0$
(for example, we may take $\xi = \sum_{i \in I}\lambda_i$),
and choose any $q$ in the free $(\C^*)^n$ orbit in $X$.
By Lemma~\ref{lemm:limitX}, the curve $c_q^\xi(r)$ converges
as $r \to -\infty$; let $q'$ be its limit. 
Also by Lemma~\ref{lemm:limitX}, for every $p$
such that $V_p \cap X_I \neq \emptyset$, the limit point $q'$ belongs to $V_p$. 
Because $X_I$ is the union over such $p$ of the subsets $V_p \cap X_I$,
and because each of these subsets is connected and contains $q'$,
the union $X_I$ is connected.
Also, every $p$ such that $V_p \cap X_I \neq \emptyset$
belongs to $V_p \cap X_I$; because the set of such $p$s is nonempty,
$X_I$ contains a fixed point.
\end{proof}

\begin{Corollary} \label{cor:pure}
In the fan $\Delta$, every cone is contained in an $n$ dimensional cone.
\end{Corollary}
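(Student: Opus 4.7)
The plan is to use Lemma~\ref{XI-connected} together with the local model $V_p \cong \C_{\alpha_1} \oplus \cdots \oplus \C_{\alpha_n}$ from Lemma~\ref{lemm:Vp} to enlarge $I$ to a ``full'' index set of size $n$.

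First, fix $I \in \Sigma$. By Lemma~\ref{XI-connected}, the intersection $X_I$ contains a fixed point $p$. In particular, $p \in X_i$ for every $i \in I$. Now I would apply Lemma~\ref{lemm:Vp} to identify an invariant neighbourhood $V_p$ of $p$ equivariantly biholomorphically with $\C_{\alpha_1}\oplus\cdots\oplus\C_{\alpha_n}$, where $\alpha_1,\ldots,\alpha_n$ are the isotropy weights at $p$.

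Next, as in Example~\ref{Cn}, the characteristic submanifolds of $X$ that meet $V_p$ correspond under this identification to the $n$ coordinate hyperplanes of $\C^n$; in particular there are exactly $n$ of them. Call their indices $j_1,\ldots,j_n$, and set $J := \{j_1,\ldots,j_n\}$. Every characteristic submanifold that passes through $p$ must meet $V_p$, so in particular $X_i$ meets $V_p$ for each $i \in I$; thus $I \subseteq J$. Moreover $\bigcap_{i \in J} X_i$ contains $p$ (it corresponds to the origin of $\C^n$ in the local model), so $J \in \Sigma$ by the definition of $\Sigma$, and hence $C_J \in \Delta$.

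It remains to verify that $C_J$ is $n$-dimensional and contains $C_I$. The inclusion $C_I \subseteq C_J$ is immediate from $I \subseteq J$ and the definition of the cones $C_I$ via positive spans. For the dimension, Example~\ref{Cn} identifies $\lambda_{j_1},\ldots,\lambda_{j_n}$ with the basis of $\Hom(S^1,(S^1)^n)$ dual to $\alpha_1,\ldots,\alpha_n$; since the faithful action forces $\alpha_1,\ldots,\alpha_n$ to be a $\Z$-basis, the $\lambda_{j_k}$ span a cone of dimension $n$. No step here looks like a serious obstacle: the only content beyond unwinding definitions is the existence of the fixed point $p \in X_I$, which is already packaged in Lemma~\ref{XI-connected}.
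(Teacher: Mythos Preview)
Your proof is correct and follows essentially the same route as the paper: use Lemma~\ref{XI-connected} to find a fixed point $p \in X_I$, invoke the local model $V_p \cong \C_{\alpha_1}\oplus\cdots\oplus\C_{\alpha_n}$ and Example~\ref{Cn} to identify exactly $n$ characteristic submanifolds through $p$ indexed by some $J \supseteq I$, and conclude that $C_J$ is an $n$-dimensional cone containing $C_I$. You have in fact spelled out a couple of points (the inclusion $I \subseteq J$ and the reason $\dim C_J = n$) that the paper leaves implicit.
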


\begin{proof}
Every cone in the fan has the form $C_I$ for some $I \in \Sigma$.
By Lemma \ref{XI-connected}, the set $X_I$ contains a fixed point;
let $p$ be such a fixed point.  
Since $V_p$ was chosen as in Lemma~\ref{lemm:Vp}, 
by Example~\ref{Cn} there exist exactly $n$ characteristic submanifolds,
say, $X_j$ for $j \in J \subset \{ 1, \ldots, m \}$ with $|J|=n$, 
that pass through $p$.  
Then $J \in \Sigma$, 
and $C_J$ is an $n$ dimensional cone in $\Delta$ that contains $C_I$.
\end{proof}

% -----------------------------------------------------------
\section{Isomorphism of the subset $X$ with a toric manifold}
\label{sec:isomorphism}
% -----------------------------------------------------------

Let $M$ be a 
connected complex manifold of complex dimension $n$,
let the torus $(S^1)^n$ act on $M$ faithfully by biholomorphisms,
and assume that this action extends to a holomorphic $(\C^*)^n$-action.
The set of fixed points is discrete; assume that it is nonempty and finite.

In Section~\ref{sec:fan}
we described an open subset $X$ of $M$ and a unimodular fan $\Delta$.
Let $M_\Delta$ be the toric variety that is associated to the fan $\Delta$.

\begin{Lemma} \label{lemm:MDelta-X}
There exists an $(S^1)^n$-equivariant biholomorphism 
between $M_\Delta$ and $X$.
\end{Lemma}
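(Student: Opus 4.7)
The plan is to build, for each fixed point $p \in M$, an $(S^1)^n$-equivariant biholomorphism $\Psi_p \colon U_{\sigma_p} \to V_p$ between the affine toric chart in $M_\Delta$ associated to the $n$-dimensional cone $\sigma_p \in \Delta$ produced in the proof of Corollary~\ref{cor:pure} and the invariant neighbourhood $V_p \subset X$ of Lemma~\ref{lemm:Vp}, and then to glue these into a global map $\Psi \colon M_\Delta \to X$. The existence of each $\Psi_p$ is essentially automatic: by construction $\sigma_p = \pos(\lambda_j \mid p \in X_j)$, and the dual $\Z$-basis consists exactly of the isotropy weights $\alpha_1, \ldots, \alpha_n$ at $p$, so both $U_{\sigma_p}$ (by the standard affine description of toric charts, cf.\ property (2) of $M_\Delta$ in the introduction) and $V_p$ (by Lemma~\ref{lemm:Vp}) are $(S^1)^n$-equivariantly biholomorphic to $\C_{\alpha_1} \oplus \cdots \oplus \C_{\alpha_n}$.

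To glue the $\Psi_p$ consistently, I would normalize them by a common base point. Fix $q$ in the unique dense free $(\C^*)^n$-orbit of $M$ established in Section~\ref{sec:fan}; since in any $V_p \cong \C^n$ the free orbit is open and dense, and must therefore coincide with the globally unique free orbit, $q$ lies in every $V_p$. Let $q_0 \in M_\Delta$ be the identity of $(\C^*)^n \subset M_\Delta$. Because $(\C^*)^n$ acts simply transitively on the free orbit, I may compose each $\Psi_p$ with a unique element of $(\C^*)^n$ to arrange $\Psi_p(q_0) = q$. After this adjustment any two maps $\Psi_p, \Psi_{p'}$ agree at $q_0$, hence on the dense $(\C^*)^n$-orbit through $q_0$ by equivariance, hence on the connected overlap $U_{\sigma_p} \cap U_{\sigma_{p'}} = U_{\sigma_p \cap \sigma_{p'}}$ by holomorphic continuation (the target $M$ being Hausdorff). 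The $\Psi_p$ therefore assemble into a single $(S^1)^n$-equivariant holomorphic map $\Psi$, defined on $\bigcup_p U_{\sigma_p} = M_\Delta$ by Corollary~\ref{cor:pure} (since every cone of $\Delta$ is a face of some $\sigma_p$) and with image $\bigcup_p V_p = X$.

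It remains to verify that $\Psi$ is bijective. Surjectivity and the local biholomorphism property are immediate from the corresponding properties of the $\Psi_p$, so the main task, and what I expect to be the main obstacle, is injectivity; this reduces to the set-theoretic identity $\Psi_p^{-1}(V_p \cap V_{p'}) = U_{\sigma_p} \cap U_{\sigma_{p'}}$ for any two fixed points $p, p'$. I plan to establish it by a $(\C^*)^n$-orbit stratification argument: in the affine coordinates of Lemma~\ref{lemm:Vp} on $V_p$, each $(\C^*)^n$-orbit is labeled by its set of vanishing coordinates, which corresponds to a face of $\sigma_p$; the stabilizer of such an orbit is the subtorus generated by the $\lambda_i$ indexing that face. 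The same orbit meets $V_{p'}$ only if its stabilizer also arises from a face of $\sigma_{p'}$, forcing the face to lie in $\sigma_p \cap \sigma_{p'}$. Reading the bijection between faces of $\sigma_p$ and $(\C^*)^n$-orbits in $U_{\sigma_p}$ backwards then places the preimage inside $U_{\sigma_p \cap \sigma_{p'}}$, yielding injectivity and completing the proof.
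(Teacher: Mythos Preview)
Your proposal is correct and tracks the paper's argument closely: build equivariant biholomorphisms between the affine toric charts $U_{\sigma_p}$ and the neighbourhoods $V_p$, normalise each at a chosen point of the dense free orbit, and glue using that the maps agree on the dense orbit and hence on overlaps. The paper isolates exactly this as the abstract Lemma~\ref{lemm:isomorphism}. The one substantive difference is the injectivity step. Instead of your orbit-stratification analysis of $\Psi_p^{-1}(V_p \cap V_{p'})$, the paper runs the \emph{same} gluing construction on the inverse maps $\Psi_p^{-1}\colon V_p \to U_{\sigma_p}$ to produce a holomorphic map $\Psi'\colon X \to M_\Delta$, and then observes that $\Psi'\circ\Psi$ and $\Psi\circ\Psi'$ agree with the identity on the dense free orbit, hence everywhere by continuity. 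This symmetric trick bypasses any description of how the $V_p$ overlap in $X$. Your route also works, and makes the orbit--cone correspondence explicit; in carrying it out, it is cleanest to phrase the stratification in terms of which global characteristic submanifolds $X_i$ an orbit lies on (so that the index set $I$ is visibly the same whether read off in $V_p$ or in $V_{p'}$), rather than via stabiliser subgroups alone, since opposite rays of $\Delta$ can yield the same circle subgroup of $(S^1)^n$.
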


We recall some properties of the set $X$ and the fan $\Delta$.
Let $F = M^{(S^1)^n}$ denote the fixed point set.
For every fixed point $p \in F$, let $\alpha_{p,1}, \ldots, \alpha_{p,n}$
denote the isotropy weights of the torus action at $p$.
\begin{enumerate}
\item
The set $X$ is the union over $p \in F$ of subsets $V_p$,
such that each $V_p$ is an invariant open neighbourhood 
of $p$ that is equivariantly biholomorphic to the linear representation
$\C_{\alpha_{p,1}} \oplus \ldots \oplus \C_{\alpha_{p,n}}$.
\item
The $n$-dimensional cones in $\Delta$ are in bijection 
with the fixed point sets $p \in F$, and the cone corresponding
to the fixed point $p$ is 
$\pos(\lambda_{i_1},\ldots,\lambda_{i_n})$, where 
$\lambda_{i_1},\ldots,\lambda_{i_n}$ is a basis of $\Lie(S^1)^n$
that is dual to the basis $\alpha_{p,1}, \ldots, \alpha_{p,n}$
of $(\Lie(S^1)^n)^*$.
\end{enumerate}

The toric variety $M_\Delta$ that is associated to the fan $\Delta$
has similar properties: it is the union over $p \in F$
of invariant subsets $V_p'$,
and every $V_p'$ is equivariantly biholomorphic 
to $\C_{\alpha_{p,1}} \oplus \ldots \oplus \C_{\alpha_{p,n}}$.

Lemma~\ref{lemm:MDelta-X} follows immediately 
from these properties of $X$ and $M_\Delta$, by the following lemma.

\begin{Lemma} \label{lemm:isomorphism}
Let $X$ and $X'$ be complex manifolds of complex dimension $n$,
equipped with holomorphic $(\C^*)^n$-actions.
Suppose that there exist open dense $(\C^*)^n$ orbits
$\calO$ in $X$ and $\calO'$ in $X'$.
Suppose that there exist invariant open subsets $V_p$ in $X$ and $V_p'$ in $X'$,
both indexed by $p \in F$, such that $X$ is the union of the sets $V_p$
and $X'$ is the union of the sets $V_p'$,
and that for each $p \in F$ there exists
an equivariant biholomorphism $\varphi_p \colon V_p \to V_p'$.
Then $X$ is equivariantly biholomorphic to $X'$.
\end{Lemma}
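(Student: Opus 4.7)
The plan is to normalize each $\varphi_p$ by post-composing with a suitable element of $(\C^*)^n$, so that the resulting maps $\tilde\varphi_p$ all coincide on the open dense orbit $\calO$, and then to invoke the identity principle for holomorphic maps to deduce that they coincide on every pairwise intersection $V_p \cap V_q$.

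First I observe that $\calO \subseteq V_p$ for every $p \in F$. Indeed, $V_p$ is $(\C^*)^n$-invariant and open, so $\calO \cap V_p$ is a non-empty (by density) invariant subset of the single orbit $\calO$, hence equal to $\calO$. The same reasoning gives $\calO' \subseteq V_p'$ and, applied to the equivariant biholomorphism $\varphi_p$, yields $\varphi_p(\calO) = \calO'$. Fix base points $q_0 \in \calO$ and $q_0' \in \calO'$. For each $p$, choose $t_p \in (\C^*)^n$ with $t_p \cdot \varphi_p(q_0) = q_0'$ (possible because $\varphi_p(q_0)$ and $q_0'$ lie in the common orbit $\calO'$), and set
\begin{equation*}
  \tilde\varphi_p := t_p \cdot \varphi_p \colon V_p \to V_p',
\end{equation*}
which is well-defined since $V_p'$ is invariant. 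The map $\tilde\varphi_p$ is again an equivariant biholomorphism with $\tilde\varphi_p(q_0) = q_0'$; by equivariance, $\tilde\varphi_p(t \cdot q_0) = t \cdot q_0'$ for all $t \in (\C^*)^n$, so the restriction $\tilde\varphi_p|_{\calO}$ is independent of $p$.

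To glue, I claim $\tilde\varphi_p = \tilde\varphi_q$ on $V_p \cap V_q$. This intersection contains $\calO$ and is connected, since any decomposition into two disjoint non-empty open pieces would, by density of $\calO$, disconnect the connected orbit $\calO$. The maps $\tilde\varphi_p$ and $\tilde\varphi_q$ are holomorphic from this connected complex manifold into $X'$ and agree on the non-empty open subset $\calO$, so the identity principle for holomorphic maps forces agreement on all of $V_p \cap V_q$. Consequently, $\varphi(x) := \tilde\varphi_p(x)$ for any $p$ with $x \in V_p$ defines a holomorphic equivariant map $\varphi \colon X \to X'$. Applying the same construction to the inverses $\tilde\varphi_p^{-1} \colon V_p' \to V_p$ (which all send $q_0'$ to $q_0$) yields a holomorphic equivariant inverse $\psi \colon X' \to X$. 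The main obstacle is ensuring that the locally defined $\varphi_p$'s become compatible on overlaps; the key is that the freedom to translate by $(\C^*)^n$ in the target permits normalization at a single reference point, after which equivariance together with density of $\calO$ forces agreement everywhere.
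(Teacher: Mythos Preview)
Your proof is correct and follows essentially the same approach as the paper: normalize each $\varphi_p$ by a translation in $(\C^*)^n$ so that all the maps send a chosen base point $q_0 \in \calO$ to $q_0' \in \calO'$, deduce agreement on $\calO$ by equivariance, extend to overlaps, and glue (and likewise for the inverses). The only cosmetic difference is that the paper passes from agreement on $\calO$ to agreement on $V_p \cap V_{\tilde p}$ simply by continuity and density of $\calO$, whereas you invoke the identity principle after arguing connectedness of the overlap; both work, but the density/continuity route is a touch more direct.
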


\begin{proof}
Necessarily, $\calO$ is contained in each $V_p$
and $\calO'$ is contained in each $V_p'$.
Fix a point $q$ in $\calO$ and a point $q'$ in $\calO'$.
After possibly composing each $\varphi_p$ by the action
of an element of $(\C^*)^n$, we may assume that $\varphi_p(q) = q'$
for each $p \in F$.  So, for each $p$ and $\tilde{p} \in F$,
the maps $\varphi_p$ and $\varphi_{\tilde{p}}$ coincide at the point $q$.
By equivariance, $\varphi_p$ and $\varphi_{\tilde{p}}$ coincide 
on all of $\calO$; by continuity, they coincide on the entire overlap
$V_p \cap V_{\tilde{p}}$.
Thus, the $\varphi_p$ fit together into a map
$$ \varphi = \bigcup_p \varphi_p \colon X \to X' .$$ 
This map is holomorphic, equivariant, and onto.
Similarly, the inverses $\psi_p := {\varphi_p}^{-1}$ fit together 
into a map
$$ \psi = \bigcup_p \psi_p \colon X' \to X .$$ 
We have that $\psi \circ \varphi = \text{id}_X$
and $\varphi \circ \psi = \text{id}_{X'}$;
thus, $\varphi \colon X \to X'$ is an equivariant biholomorphism,
as required.
\end{proof}

%-------------------------
\section{The compact case}
\label{sec:compact}
%-------------------------

Let $M$ be a connected complex manifold 
of complex dimension $n$,
with a faithful $(S^1)^n$-action, with fixed points. 

Suppose that $M$ is compact.  In Section~\ref{sec:C-star-n}
we extended the $(S^1)^n$-action to a holomorphic $(\C^*)^n$-action.
In Section~\ref{sec:fan} we described an open subset $X$ of $M$ 
and we associated to it a fan $\Delta$.

\begin{Lemma} \label{lemm:complete}
The fan $\Delta$ is complete.
\end{Lemma}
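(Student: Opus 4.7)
The plan is to show that every $\xi \in \Lie(S^1)^n$ lies in some $n$-dimensional cone of $\Delta$; by Corollary~\ref{cor:pure}, this suffices to establish completeness. Fix such a $\xi$ and a point $q$ in the open free $(\C^*)^n$-orbit of $M$, and consider the curve $c_q^\xi(r) = \exp(-rJ\xi)\cdot q$.

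Each $V_p$ from Lemma~\ref{lemm:Vp} is in fact $(\C^*)^n$-invariant: the $(S^1)^n$-equivariant biholomorphism $V_p \cong \C_{\alpha_{p,1}}\oplus\cdots\oplus\C_{\alpha_{p,n}}$ is automatically $(\C^*)^n$-equivariant, since the complexified action is determined by the $(S^1)^n$-action and the complex structure. Moreover, by Section~\ref{sec:fan}, the free orbit of $M$ coincides with the free $(\C^*)^n$-orbit inside each $V_p$, so $q \in V_p$ for every fixed point $p$, and hence $c_q^\xi(r) \in V_p$ for all $r \in \R$. In $V_p$-coordinates, $c_q^\xi(r) = \bigl(e^{2\pi r \langle \xi,\, \alpha_{p,i}\rangle} z_i(q)\bigr)_{i=1}^n$ with every $z_i(q) \neq 0$.

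Now suppose, for contradiction, that $\xi$ lies in no $n$-dimensional cone of $\Delta$. Then at every fixed point $p$ some pairing $\langle \xi,\, \alpha_{p,i}\rangle$ is strictly negative, so as $r \to -\infty$ the corresponding coordinate of $c_q^\xi(r)$ in $V_p$ has absolute value tending to $\infty$, and the curve leaves every compact subset of $V_p \cong \C^n$. By compactness of $M$, some subsequence $c_q^\xi(r_n)$ converges to a point $q_\infty \in M$; since that coordinate diverges along the subsequence, $q_\infty$ cannot lie in $V_p$, and this holds for every $p$, giving $q_\infty \in M \setminus X$.

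The main step is to derive a contradiction by showing $M \setminus X = \emptyset$. This set is closed (as the complement of the open $X$) and $(\C^*)^n$-invariant. If it were nonempty, pick a $(\C^*)^n$-orbit of minimal complex dimension $d$ in $M \setminus X$; its closure in $M$ again lies in $M \setminus X$ and decomposes as that orbit together with orbits of strictly smaller complex dimension, so by minimality those smaller orbits are absent and the chosen orbit is closed in $M$. As a closed $(\C^*)^n$-orbit it is a compact complex homogeneous space $(\C^*)^n/H$ for some closed complex subgroup $H \subseteq (\C^*)^n$. But every closed complex subgroup of $(\C^*)^n$ has the form $(\C^*)^k \times F$ with $F$ finite, so the quotient is isomorphic to $(\C^*)^{n-k}$ and is compact only when $k=n$. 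Hence the orbit is a single $(S^1)^n$-fixed point, which lies in $X$ --- contradicting membership in $M \setminus X$. The main obstacle is this final paragraph: justifying the noetherian descent (using that $(\C^*)^n$-orbits are locally closed complex submanifolds whose closures are $(\C^*)^n$-invariant analytic subvarieties) and invoking the classification of closed complex subgroups of $(\C^*)^n$.
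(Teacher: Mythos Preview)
Your strategy is essentially the reverse of the paper's.  The paper first proves completeness of $\Delta$ by a combinatorial argument: it shows (Lemma~\ref{lemm:no-facets}) that every $(n-1)$-dimensional cone in $\Delta$ is a common face of two $n$-dimensional cones---by reducing to the one-dimensional case (Lemma~\ref{lemm:complete0}) applied to the curve $X_I$, whose compactness is established using the ambient $M$---and then a connectedness argument on $\Lie(S^1)^n \ssminus |\Delta^{n-2}|$ gives $|\Delta| = \Lie(S^1)^n$.  Only \emph{after} completeness is in hand does the paper conclude $X = M$, via compactness of $M_\Delta$.  You instead try to establish $X = M$ directly (your final paragraph); if that worked, completeness would follow immediately and your first three paragraphs would in fact be superfluous, since $M_\Delta \cong X = M$ compact already forces $\Delta$ complete.

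The gap you flag is genuine, not a routine detail.  The orbit-closure machinery you invoke is standard in the \emph{algebraic} category (Chevalley's theorem that algebraic-group orbits are locally closed, hence open in their closures, so boundary orbits have strictly smaller dimension), but it does not transfer automatically to holomorphic $(\C^*)^n$-actions on compact complex manifolds with no algebraicity or K\"ahler hypothesis.  Concretely:
\begin{itemize}
\item You need each $(\C^*)^n$-orbit in $M \ssminus X$ to be locally closed so that its boundary is a proper closed invariant set containing only strictly lower-dimensional orbits; in the holomorphic setting this is not automatic, and $M \ssminus X$ is not a priori an analytic subvariety.
\item Even given a closed, hence compact, orbit $\mathcal{O}$, the continuous bijection $(\C^*)^n/H \to \mathcal{O}$ need not be a homeomorphism unless the orbit is embedded, so you cannot immediately read off compactness of $(\C^*)^n/H$.
\item The assertion that every closed complex subgroup of $(\C^*)^n$ has the form $(\C^*)^k \times F$ also needs proof: one must show that the complex Lie subalgebra $\mathfrak{h}$ is rational with respect to the lattice $\Hom(S^1,(S^1)^n)$, which is where closedness is used.
\end{itemize}
The paper's route sidesteps all of this by never touching $M \ssminus X$: it works entirely inside the well-understood charts $V_p$ and the fan combinatorics, which is why it stays elementary.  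Your approach might be salvageable with substantial additional input (e.g.\ a holomorphic analogue of the Borel fixed-point theorem, or Bia{\l}ynicki-Birula--type arguments), but as written the final paragraph is a proof sketch of the main theorem itself rather than of this lemma.
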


We begin by proving a special case:

\begin{Lemma} \label{lemm:complete0}
Let $M'$ be a complex manifold of complex dimension one,
equipped with a faithful holomorphic action of $S^1$ 
with at least one fixed point.
Suppose that $M'$ is compact and connected.
Then $M'$ is equivariantly biholomorphic to $\CP^1$
with a standard $\C^*$-action.
\end{Lemma}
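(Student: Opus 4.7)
The plan is to combine the local model of Lemma~\ref{lemm:Vp} with a compactness argument to cover $M'$ by two standard affine charts that glue to $\CP^1$.

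First, I would extend the $S^1$-action to a $\C^*$-action via Section~\ref{sec:C-star-n} and apply Lemma~\ref{lemm:Vp} to the given fixed point $p$, obtaining an invariant open neighbourhood $V_p$ equivariantly biholomorphic to $\C_\alpha$. Faithfulness of the $S^1$-action forces $\alpha = \pm 1$, and after inverting the $S^1$-coordinate if necessary I may take $\alpha = 1$, so that $V_p \cong \C$ with the standard $\C^*$-action. As observed in Section~\ref{sec:fan}, $V_p \ssminus \{p\}$ equals the unique open dense free $\C^*$-orbit $\calO$; in particular $V_p$ is dense in $M'$, and the compact $\C^*$-invariant complement $Y := M' \ssminus V_p$ has empty interior.

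The main step is to produce a second fixed point $p'$. Taking a sequence $z_n$ in $V_p \cong \C$ with $|z_n| \to \infty$ and extracting by compactness a subsequential limit $p' \in Y$, I observe that the $\C^*$-orbit of $p'$ lies in $Y$ and must therefore be zero-dimensional (a one-complex-dimensional orbit would be open in $M'$, contradicting the empty interior of $Y$); hence $p'$ is fixed. Another application of Lemma~\ref{lemm:Vp} yields $V_{p'} \cong \C_{\alpha'}$. Since $p \notin V_{p'}$ (else $V_{p'} \cong \C$ would contain two fixed points), the overlap $V_p \cap V_{p'}$ coincides with $\calO$, and the transition $\varphi_{p'} \circ \varphi_p^{-1} \colon \C^* \to \C^*$ is an equivariant biholomorphism from weight $1$ to weight $\alpha'$ that sends $|z| \to \infty$ to coordinates approaching $0$; this forces $\alpha' = -1$ and, up to a scalar rescaling, $z' = 1/z$. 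The same picture shows that the flow of $-J\xi$ on $\calO$ has exactly two attractors in $M'$, namely $p$ as $t \to -\infty$ and $p'$ as $t \to +\infty$; any further fixed point $p''$ would, via Lemma~\ref{lemm:Vp}, provide a third such attractor, contradicting the Hausdorff property of $M'$. Hence $M' = \calO \cup \{p, p'\} = V_p \cup V_{p'}$.

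Finally, two copies of $\C$ glued along $\C^*$ by $z \mapsto 1/z$ is the standard atlas of $\CP^1$, and the weights $+1$ at $p$ and $-1$ at $p'$ recognize the equivariance as a standard $\C^*$-action on $\CP^1$. The hard part is the middle step: showing that the point at infinity in $V_p$ is fixed, that the weights at $p$ and $p'$ are opposite, and above all that no third fixed point can exist. Once these are established, the equivariant biholomorphism $M' \cong \CP^1$ drops out from the gluing.
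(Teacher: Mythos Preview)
Your argument is correct and gives a genuine alternative to the paper's proof. The paper proceeds by passing to the quotient $M'/S^1$: outside the fixed points the action is free, so the quotient is a compact connected one-manifold with boundary whose boundary is the image of the fixed point set; by the classification of one-manifolds it must be a closed segment, hence there are exactly two fixed points, and the flow of $-J\xi$ descends to a fixed-point-free flow on the open segment that necessarily runs from one endpoint to the other, forcing the two weights to be $\pm 1$. The paper then reads off the fan of $\CP^1$ and invokes Lemma~\ref{lemm:isomorphism}.

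Your route instead builds the two affine charts directly from Lemma~\ref{lemm:Vp} together with a compactness/limit argument, bypassing the quotient construction and the one-manifold classification. One small point you should make explicit: the deduction ``Hence $M' = \calO \cup \{p,p'\}$'' uses not only that there is no third fixed point but also that \emph{every} point of $Y = M' \ssminus V_p$ is fixed; this follows because the orbit-dimension argument you gave for the particular limit $p'$ applies verbatim to any point of the invariant nowhere-dense set $Y$. With that remark your proof is complete. The paper's path is shorter once one grants the quotient-manifold structure; yours stays entirely within the chart machinery already set up in Sections~\ref{sec:Vp}--\ref{sec:isomorphism} and makes the gluing to $\CP^1$ fully explicit.
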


\begin{proof}
Consider the $S^1$-action on $M'$.  Near a fixed point,
it is isomorphic to the restriction of either the standard $S^1$-action on $\C$
or the opposite $S^1$-action on $\C$
to an invariant neighbourhood of the origin in $\C$. 

Consider the flow that is generated by $-J\xi$, where $\xi$
generates the $S^1$-action.
If the $S^1$-action near a fixed point is standard,
then the trajectories of this flow converge to the fixed point
as their parameter approaches $-\infty$.
If the $S^1$-action near a fixed point is opposite from standard,
then the trajectories of this flow converge to the fixed point
as their parameter approaches $\infty$.

Outside the fixed point set, the action is free.
The quotient $M'/S^1$ is\footnote
{
Here, ``is" means
that there exists a unique manifold-with-boundary structure on $M'/S^1$
such that a function is smooth if and only if its pullback to $M'$ is smooth.
}
a real one-manifold with boundary;
its boundary is exactly the image of the fixed point set. 
Because $M'$ is compact and connected and contains a fixed point, and by the
classification of one-manifolds, the quotient $M'/S^1$
must be a closed segment.

The flow on $M'$ that is generated by $-J\xi$ descends to a flow
on the interior of $M'/S^1$ that does not have fixed points.
For each boundary component, the flow approaches that component
either as its parameter approaches $\infty$ or as the parameter
approaches $-\infty$.  Necessarily, it approaches one boundary
component when the parameter approaches $\infty$
and it approaches the other boundary component
when the parameter approaches $-\infty$.

The corresponding fan must then be equal to the fan of $\CP^1$,
and the manifold is equivariantly biholomorphic to $\CP^1$
by Lemma~\ref{lemm:isomorphism}.
\end{proof}

We now return to the setup of Lemma~\ref{lemm:complete}:
We have a connected complex manifold $M$ 
of complex dimension $n$,
with a faithful $(S^1)^n$-action, with fixed points. 
We assume that $M$ is compact.
We consider the open subset $X$ of $M$ 
and the associated fan $\Delta$
as described in Section~\ref{sec:fan}.

\begin{Lemma} \label{lemm:no-facets}
Every $n-1$ dimensional cone in $\Delta$ is a common face
of two $n$ dimensional cones in $\Delta$.
\end{Lemma}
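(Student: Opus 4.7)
The plan is, given an $(n-1)$-dimensional cone $C_I \in \Delta$ with $|I|=n-1$, to produce a second $n$-dimensional cone in $\Delta$ having $C_I$ as a face, alongside the cone $C_J$ at the fixed point $p \in X_I$ supplied by Corollary~\ref{cor:pure}. The geometric idea is that $X_I$ extends to a compact copy of $\CP^1$ sitting inside $M$, and the new $n$-dimensional cone will be the one read off at the second $(S^1)^n$-fixed point of this $\CP^1$.

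First I would introduce the $(n-1)$-dimensional subtorus $T_I \subset (S^1)^n$ spanned by the circles $\lambda_i$, $i \in I$, and let $Y$ be the connected component of the fixed point set $M^{T_I}$ containing $X_I$. Because $T_I$ acts on $M$ by biholomorphisms and $M$ is compact, $Y$ is a compact closed complex submanifold of $M$. Using the linearization $V_p \cong \C_{\alpha_{p,1}} \oplus \cdots \oplus \C_{\alpha_{p,n}}$ from Lemma~\ref{lemm:Vp}, together with the identification $T_I = \ker \alpha_{p,n}$ where $\alpha_{p,n}$ is the weight tangent to $X_I$ at $p$, I would show that $\dimc Y = 1$. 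The residual circle $(S^1)^n/T_I$ then acts faithfully on $Y$ with $p$ as a fixed point, so Lemma~\ref{lemm:complete0} identifies $Y$ equivariantly with $\CP^1$, producing a second $(S^1)^n$-fixed point $p' \neq p$.

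The next step is to show $p' \in X_I$. The intersection $V_p \cap Y$ equals the $T_I$-fixed subspace of $V_p$, so it is biholomorphic to a single coordinate axis $\C$ and is $(\C^*)^n$-invariant; since $V_p$ contains only one $(S^1)^n$-fixed point, one concludes $V_p \cap Y = Y \ssminus \{p'\}$, and symmetrically $V_{p'} \cap Y = Y \ssminus \{p\}$, whence $Y \subset V_p \cup V_{p'} \subset X$. Now $X_I$ is open in $Y$ (both are one-dimensional complex submanifolds of $M$ with $X_I \subset Y$) and closed in $Y$ (since $X_I$ is closed in $X$ and $Y \subset X$); by connectedness of $Y$, $X_I = Y$, and in particular $p' \in X_I$, i.e., $p' \in X_i$ for every $i \in I$. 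Consequently $J' := I \cup \{j'\}$ lies in $\Sigma$, where $j'$ indexes the unique additional characteristic submanifold through $p'$, and $C_{J'}$ is an $n$-dimensional cone in $\Delta$ having $C_I$ as a face.

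To conclude that $C_{J'} \neq C_J$ I would invoke Lemma~\ref{lemm:limitX}: directions in $C_J^0$ make $c_q^\xi(r)$ converge, as $r \to -\infty$, to the fixed point $p$, while directions in $C_{J'}^0$ make it converge to $p' \neq p$, so the relative interiors are disjoint and the cones differ. I expect the main obstacle to be the global-to-local passage $Y \subset X$: it hinges on each $V_p$ being $(\C^*)^n$-invariant and on the observation that a $(\C^*)^n$-invariant open subset of $\CP^1$ containing exactly one fixed point must be the complement of the other fixed point. Once $Y \subset X$ is secured, the identifications $X_I = Y$ and $p' \in X_I$ follow cleanly.
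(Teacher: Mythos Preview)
Your proposal is correct and follows essentially the same route as the paper: define the codimension-one subtorus $T_I$, pass to the connected component $Y$ (the paper's $N$) of $M^{T_I}$ containing $X_I$, apply Lemma~\ref{lemm:complete0} to identify $Y\cong\CP^1$, use the $(\C^*)^n$-invariance of $V_p$ and $V_{p'}$ to conclude $Y\subset X$ and hence $Y=X_I$, and read off the second top-dimensional cone at the second fixed point. Your write-up is slightly more explicit than the paper's in constructing $J'$ and verifying $C_{J'}\neq C_J$ (the paper simply says ``this implies the result'' once $X_I\cong\CP^1$), but the substance is the same.
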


\begin{proof}
Let $C_I$ be an $n-1$ dimensional cone in $\Delta$,
corresponding to the subset
$I = \{ i_1,\ldots,i_{n-1} \}$ of $\{ 1, \ldots, m \}$.

Let $T_I$ be the codimension one subtorus of $(S^1)^n$
that is generated by the circles $T_i$ for $i \in I$.
By Lemma~\ref{XI-connected}, $X_I$ is a connected complex manifold
of dimension one, equipped with a faithful holomorphic action
of the circle $(S^1)^n/T_I$ with at least one fixed point.
We will now show that $X_I$ is compact,
and will deduce Lemma~\ref{lemm:no-facets} 
from Lemma~\ref{lemm:complete0}.

First note that $X_I$ is a connected component 
of the fixed point set of $T_I$ in $X$.
This follows from the facts that $X_I$ is connected 
(by Lemma~\ref{XI-connected})
and that, 
for each of the subsets $V_p$,
if the intersection $V_p \cap X_I$
is nonempty then it is a connected component of the fixed point set 
of $T_I$ in $V_p$.
Let $N$ denote the connected component of the fixed point set
of $T_I$ in $M$ that contains $X_I$.
As in any holomorphic torus action on a complex manifold,
$N$ is an $(S^1)^n$-invariant closed complex submanifold of $M$.
By examining $N$ near a point of $X_I$, we deduce that 
$N$ has complex dimension one.
Because $N$ is closed in $M$ and $M$ is compact, $N$ is compact.
By Lemma~\ref{lemm:complete0}, $N$ is equivariantly biholomorphic
to $\CP^1$ with a standard action of the circle $(S^1)^n/T_I$.
In particular, $N$ contains two fixed points; denote them $p'$ and $p''$. 
The intersection $V_{p'} \cap N$, being a $(\C^*)^n$-invariant
neighbourhood of $p'$ in $N$, must be all of $N \ssminus \{ p'' \}$.
Similarly, 
the intersection $V_{p''} \cap N$ is all of $N \ssminus \{ p' \}$.
So $N$ is contained in the union $X$ of the sets $V_p$,
and so $N$ must be equal to $X_I$.
Thus, $X_I$ is equivariantly biholomorphic to $\CP^1$ 
with a standard action of the circle $(S^1)^n/T_I$.
This implies the result of Lemma~\ref{lemm:no-facets}.

\end{proof}

We are now ready to prove Lemma~\ref{lemm:complete}.

\begin{proof}[Proof of Lemma~\ref{lemm:complete}]
Let $|\Delta|$ denote the union of the cones in $\Delta$,
and let $|\Delta^{n-2}|$ denote the union of the cones in $\Delta$
that have codimension $\geq 2$.
The complement $\Lie(S^1)^n \ssminus |\Delta^{n-2}|$
is connected, open, and dense in $\Lie(S^1)^n$.

By Lemma~\ref{lemm:no-facets}, the union of the relative
interiors of the faces of $\Delta$ of dimension $(n-1)$ 
and of dimension $n$ is open in $\Lie(S^1)^n$.
This union is $|\Delta| \ssminus |\Delta^{n-2}|$.
Thus, $|\Delta| \ssminus |\Delta^{n-2}|$ is also open in 
$\Lie(S^1)^n \ssminus |\Delta^{n-2}|$.

But because $|\Delta|$ is closed in $\Lie(S^1)^n$,
we also have that $|\Delta| \ssminus |\Delta^{n-2}|$ is closed
in $\Lie(S^1)^n \ssminus |\Delta^{n-2}|$.

Because $|\Delta| \ssminus |\Delta^{n-2}|$
is open and closed in $\Lie(S^1)^n \ssminus |\Delta^{n-2}|$
and $\Lie(S^1)^n \ssminus |\Delta^{n-2}|$ is connected,
we deduce that $|\Delta| \ssminus |\Delta^{n-2}|$
is either empty or is equal to all of 
$\Lie(S^1)^n \ssminus |\Delta^{n-2}|$.

Because, by assumption, $M$ has a fixed point, 
$\Delta$ has at least one $n$ dimensional cone,
so $|\Delta| \ssminus |\Delta^{n-2}|$ is not empty.
So $|\Delta| \ssminus |\Delta^{n-2}|$ 
is equal to all of $\Lie(S^1)^n \ssminus |\Delta^{n-2}|$.
Taking the closures,
we deduce that $|\Delta| = \Lie(S^1)^n$, as required.
\end{proof}

We are now ready to prove our main theorem.

\begin{proof}[Proof of Theorem~\ref{main-theorem}.]
Lemma~\ref{lemm:isomorphism} gives an equivariant biholomorphism
$$ \varphi \colon M_\Delta \to X.$$
By Lemma~\ref{lemm:complete}, the fan $\Delta$ is complete.
This implies that the toric variety $M_\Delta$ is compact. 
So $X$ must be compact.
Because $M$ is Hausdorff and connected, 
and $X$ is a subset that is both compact and open, $X$ is all of $M$.
So $\varphi$ defines an equivariant biholomorphism
from $M_\Delta$ to $M$, as required.
\end{proof}

\subsection*{Acknowledgements}
We would like to thank the anonymous referee for helpful suggestions.
The second author would also like to thank Ignasi Mundet i Riera
and Jaume Amor\'os for helpful discussions.

\end{document}